\newtheorem{thm}[subsection]{Theorem}
\newtheorem{thm/def}[subsection]{Theorem/Definition}
\newtheorem{lem}[subsection]{Lemma}
\newtheorem{prop}[subsection]{Proposition}
\theoremstyle{definition}
\newtheorem{defn}[subsection]{Definition}
\theoremstyle{definition}
\theoremstyle{definition}
\newtheorem{rem}[subsection]{Remark}
\newtheorem{example}[subsection]{Example}
\numberwithin{equation}{subsection}
\newtheorem{claim}[subsection]{Claim}
\newtheorem{data}[subsection]{Data}
\newtheorem{pg}[subsection]{}
\newcommand{\Sp}{\text{\rm Spec}}
\newcommand{\mls}{\mathscr}
   \def\MR#1{}
\begin{document}

\title{Point objects on abelian varieties}

 \author{Aise Johan de Jong and Martin Olsson}

\begin{abstract}
We classify the point objects in the derived category $D(X)$ of a torsor under an abelian variety over a field of characteristic $0$.
\end{abstract}

\maketitle

\section{Introduction}
Let $k$ be a field of characteristic $0$ and let $X/k$ be a torsor under an abelian variety $A/k$ of dimension $d$.  Let $A^t$ denote the dual abelian variety of $A$ and let $D(X)$ denote the bounded derived category of coherent sheaves on $X$. 

Recall (see for example \cite{MR498572}) that if $k$ is algebraically closed then a vector bundle $\mls E$ on $X$ is called \emph{semi-homogeneous} if for every $a\in A(k)$ there exists a line bundle $\mls L_a$ on $X$ such that
$$
t_a^*\mls E\simeq \mls E\otimes \mls L_a.
$$
For general fields $k$ we call a vector bundle $\mls E$ semi-homogeneous if its base change $\mls E_{\bar k}$ to $X_{\bar k}$ is semi-homogeneous, where $\bar k$ is an algebraic closure of $k$.

The main result of this article is the following:

\begin{thm}\label{T:mainthm}
Let $F\in D(X)$ be an object satisfying the following:
\begin{enumerate}
    \item [(i)] $\text{\rm Ext}^i(F, F) = 0$ for $i<0$.
    \item [(ii)] The $k$-vector space $\text{\rm Ext}^0(F, F)$ has dimension $1$ and $\text{\rm Ext}^1(F, F)$ has dimension $\leq d$.
\end{enumerate}
Then 
$$
F \simeq i_*\mls E[r]
$$
where $r$ is an integer,  $i:Z\hookrightarrow X$ is a torsor under a sub-abelian variety $H\subset  A$, and $\mls E$ is a geometrically simple semi-homogeneous (with respect to the $H$-action) vector bundle on $Z$.

Conversely, if $i:Z\hookrightarrow X$ is a torsor under a sub-abelian variety $H\subset A$ and $\mls E$ is a geometrically simple semi-homogeneous vector bundle on $Z$ then $F:= i_*\mls E$ satisfies conditions (i) and (ii).
\end{thm}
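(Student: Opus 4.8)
The plan is to induct on $d=\dim X$, the main tool being the stabilizer group scheme of $F$ under the action of $A\times A^t$ by translations and twists by degree-zero line bundles. First I would reduce to $k=\bar k$ (harmless: $\dim_k\Ext^i(F,F)$ is unchanged under the flat base change $k\to\bar k$, and the output datum $(Z,H,\mls E,r)$ descends, since $H$ will arise as the identity component of a group scheme defined over $k$ and a $k$-form of a geometrically simple semihomogeneous bundle is again geometrically simple and semihomogeneous). Over $\bar k$ recall $\Pic^0(X)\cong A^t$ canonically (translations act trivially on $\Pic^0$). As $\Ext^0(F,F)=k$, the object $F$ is simple, so the functor $(a,L)\mapsto\{t_a^*F\simeq F\otimes L\}$ is represented by a closed subgroup scheme $\Phi(F)\subseteq A\times A^t$ — the simple objects of $D(X)$ form a $\mathbb{G}_m$-gerbe over an algebraic space carrying an action of $A\times A^t$, and $\Phi(F)$ is the stabilizer of the point classifying $F$, closed because $A\times A^t$ is proper. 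Write $B:=\Phi(F)^\circ$, an abelian variety.

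The $\Ext^1$-bound forces $B$ to be large. Infinitesimal translation and twisting assemble into a canonical $k$-linear map
\[
\alpha_F\colon \operatorname{Lie}(A)\oplus\operatorname{Lie}(A^t)\;=\;H^0(X,T_X)\oplus H^1(X,\mathcal{O}_X)\;\longrightarrow\;\Ext^1(F,F),
\]
the first summand given by pairing the Atiyah class of $F$ with a constant vector field, the second by $\eta\mapsto(F\otimes(-))(\eta)$; deformation theory identifies $\ker\alpha_F$ with the tangent space $\operatorname{Lie}\Phi(F)$. Since $\operatorname{char}k=0$, $\Phi(F)$ is smooth, so $\dim B=\dim\ker\alpha_F\geq 2d-\dim\Ext^1(F,F)\geq d$ by (ii). Put $H:=\operatorname{image}(B\to A)$, an abelian subvariety of $A$. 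For $a\in H$ one has $t_a^*F\simeq F\otimes L$ for some $L$, so $\operatorname{Supp}F$ is $H$-invariant; it is also connected, since a splitting $\operatorname{Supp}F=Z_1\sqcup Z_2$ into disjoint nonempty closed sets would, via the Mayer--Vietoris triangle for cohomology with supports (using $R\Gamma_{Z_1\cap Z_2}=R\Gamma_\emptyset=0$), give $F\simeq R\Gamma_{Z_1}(F)\oplus R\Gamma_{Z_2}(F)$ with both summands nonzero, contradicting $\dim\Ext^0(F,F)=1$. A connected union of the pairwise disjoint closed $H$-orbits is a single $H$-orbit, so $\operatorname{Supp}F=Z$ is a torsor under $H$, of dimension $\dim H$.

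The crux is now the claim that $F\simeq i_*\mls E[r]$ with $i\colon Z\hookrightarrow X$ and $\mls E$ a geometrically simple semihomogeneous vector bundle on $Z$; granting this, the theorem follows, and moreover it reduces to the case $Z=X$. Indeed, suppose $Z\neq X$ and that one has already shown $F\simeq i_*G$ for some $G\in D(Z)$ (the essential point — see below). The normal bundle $N_{Z/X}$ is then trivial of rank $c:=d-\dim H\geq 1$ (both $T_X|_Z$ and $T_Z$ are trivial), so $Li^*i_*G\simeq\bigoplus_{j\geq 0}G^{\oplus\binom{c}{j}}[j]$, whence adjunction gives $\Ext^n_X(F,F)=\bigoplus_j\binom{c}{j}\,\Ext^{n-j}_Z(G,G)$; reading off the $j=0$ term successively in degrees $n<0$, $n=0$, $n=1$ shows that $G$ satisfies (i)--(ii) on $Z$ with $d$ replaced by $\dim H<d$, and induction gives $G\simeq\mls E[r]$ as wanted. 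In the remaining case $Z=X$ one has $H=A$ and $B\twoheadrightarrow A$, and the claim is exactly that the resulting ``semihomogeneous object'' $F$ is a shift of a simple semihomogeneous vector bundle — this is where Mukai's structure theory for semihomogeneous sheaves \cite{MR498572} enters. Conversely: for $\mls E$ geometrically simple semihomogeneous on a torsor $Z$ under $H$, Mukai's computation gives $\dim\Ext^i_Z(\mls E,\mls E)=\binom{\dim H}{i}$, and plugging this into the same $\Ext$-formula with $c=d-\dim H$ yields $\Ext^{<0}_X(i_*\mls E,i_*\mls E)=0$, $\dim\Ext^0=1$ and $\dim\Ext^1=\dim H+c=d$, so (i)--(ii) hold.

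The main obstacle is the part just flagged: showing that $F$, supported on the torsor $Z$ and satisfying (i)--(ii), is a pushforward $i_*G$ rather than an object ``thickened'' in the normal directions, and — in the base case $Z=X$ — that such a semihomogeneous object is a shift of a \emph{locally free} sheaf rather than of a torsion-free sheaf with singularities or of an honest complex. These are all the same phenomenon: a normal thickening of $F$, a cohomological spreading of $F$ across several degrees, or non-local-freeness of its top cohomology sheaf on $Z$ each forces $\dim\Ext^0(F,F)>1$ or $\dim\Ext^1(F,F)>d$, violating (ii). Turning this bookkeeping into a proof — by a careful $\Ext^1$-dimension count against the $d$ honest deformation directions supplied by $\alpha_F$ and the $B$-equivariance, combined with Mukai's classification of simple semihomogeneous sheaves to settle $Z=X$ — is the technical heart of the argument; the remaining steps above are relatively routine.
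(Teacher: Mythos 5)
Your overall framework (the stabilizer $\mathbf{S}\subset A\times A^t$ of $F$, its tangent space identified with the kernel of the map to $\Ext^1(F,F)$, and characteristic-$0$ smoothness giving $\dim\mathbf{S}\ge d$) matches the paper's, but there are two genuine gaps.

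First, the step ``a connected union of the pairwise disjoint closed $H$-orbits is a single $H$-orbit'' is false as stated: $X$ itself is a connected union of pairwise disjoint cosets of any proper abelian subvariety $H$. Connectedness forces a single orbit only once you know each irreducible component of $\operatorname{Supp}F$ has dimension \emph{exactly} $\dim H$, and nothing in your argument controls this: $H$-invariance only gives $\dim Z'\ge \dim H$ for each component $Z'$, and you have no upper bound. The paper closes exactly this gap with a separate dimension count: for a component $Z'$ of dimension $d'$ it takes an alteration $\widetilde Z\to Z'$ and its Albanese to show that the kernel $\mathbf{K}$ of $\mathbf{S}^0\to H$ (a subgroup of $A^t$) consists of line bundles restricting to torsion, hence trivial, line bundles on $\widetilde Z$, forcing $\dim\mathbf{K}\le d-d'$; combined with $d\le\dim\mathbf{S}^0=\dim H+\dim\mathbf{K}\le d'+(d-d')=d$ this pins down $d'=\dim H$ and makes every component an orbit. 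Without some version of this, your torsor claim does not follow.

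Second, you explicitly defer what you call the technical heart: that $F$ is scheme-theoretically (not just set-theoretically) supported on $Z$, is concentrated in a single degree, and has locally free, simple cohomology there. These are not routine bookkeeping against the $\Ext^1$ bound; they occupy sections 2--4 of the paper and mostly use conditions (i) and $\End(F)=k$ rather than the bound on $\Ext^1$. Scheme-theoretic support follows from $\End(F)=k$ (the action of functions transverse to $Z$ on each $\mls H^s(F)$ factors through $\End(F)$); concentration in one degree and simplicity require the structure theory of semi-homogeneous complexes: pulling back along a minimal isogeny $\pi:A'\to A$, decomposing $\pi^*F$ into isotypic pieces indexed by $A^{\prime t}/\alpha'(\operatorname{Ker}\pi)$, and using that distinct pieces have vanishing Exts while pieces with the same index admit nonzero degree-$0$ maps, which would contradict $\Ext^{<0}(F,F)=0$ if two cohomology sheaves were nonzero. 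A proposal that identifies these as the hard part but does not supply them is not yet a proof.
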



\begin{rem} If $k$ is algebraically closed then there is a classification of simple semi-homogeneous vector bundles on an abelian variety $A$ \cite[5.6 and 5.8]{MR498572}.  They are all of the form $\pi _*\mls L'$, where $\pi :A'\rightarrow A$ is an isogeny for which the induced map
$$
\lambda :A'\rightarrow A^{\prime t}, \ \ a'\mapsto t_{a'}^*\mls L'\otimes \mls L^{\prime -1}
$$
is injective on $\text{Ker}(\pi )$.  This description will be important in our arguments and we revisit some of the arguments proving this in a more general setting in sections \ref{S:section2}-\ref{S:section4}.
\end{rem}

\begin{rem} The assumption that $k$ has characteristic $0$ is used in exactly one place in the proof (see \ref{P:6.1}).  The rest of the argument does not use this assumption.
\end{rem}

\begin{rem} After completion of the initial draft of this article we were made aware of the close connection between this work and that of Polishchuk in \cite{Polishchuk, Polishchuk2, Polishchuk3}.  In fact, the methods proving \ref{T:mainthm} also yield a classification of ``Lagrangian pairs'' studied in loc. cit.  We explain this in section \ref{S:section7}.
\end{rem}

\begin{example}
Let $A$ be an abelian variety over $k$ and let $\mls L$ be a line bundle on $A$.  Let $\Phi :D(A)\rightarrow D(A^t)$ be the equivalence provided by the Poincar\'e bundle.  Then $\Phi (\mls L)$ is a point object on $A^t$ which is described in  \cite[5.1]{Gulbrandsen} as follows. Let
$$
\lambda :A\rightarrow A^t, \ \ a\mapsto t_a^*\mls L\otimes \mls L^{-1}
$$
be the map defined by $\mls L$, let $C\subset A$ be the connected component of the identity in $\text{Ker}(\lambda )$, and let $q:A\rightarrow B:= A/C$ be the induced quotient.  By a theorem of Kempf \cite[Appendix, Theorem 1]{Quadratic} we can write the line bundle $\mls L$ as
$$
\mls L\simeq q^*\mls M\otimes \mls S
$$
for a non-degenerate line bundle $\mls M$ on $B$ and a line bundle $\mls S$ algebraically equivalent to $0$.  Gulbrandsen then shows that 
$$
\Phi (\mls L) = t_{\mls S}^*(q^t_*\Phi _B(\mls M)),
$$
where $\Phi _B:D(B)\rightarrow D(B^t)$ is the Fourier transform for $B$ and $q^t:B^t\rightarrow A^t$ is the dual of $B$.  Now since $\mls M$ is non-degenerate the transform $\Phi _B(\mls M)$ is a shift of a vector bundle on $B^t$ \cite[11.11]{Polishchuk} and we see that $\Phi (\mls L)$ has the form described in \ref{T:mainthm}.
\end{example}

\subsection{Idea of the proof}

The basic idea of the proof of \ref{T:mainthm} is to study the action of $A\times A^t$ on the derived category $D(X)$ provided by Rouquier's work \cite{Rouquier}.  If $F$ satisfies conditions (i) and (ii) in \ref{T:mainthm} then we show that the stabilizer $\mathbf{S}\subset A\times A^t$, suitably defined, of $F$ under this action has dimension equal to $\text{dim}(A)$.  This gives $F$ the structure of a ``semi-homogeneous complex'' under the action of $\mathbf{S}$ in the sense that for $(a, [\mls L])\in \mathbf{S}$ we have
$$
t_a^*F\simeq F\otimes \mls L.
$$
The rest of the proof then reduces to understanding the structure of such complexes.

Sections \ref{S:section2}-\ref{S:section4} are devoted to developing the necessary theory of semi-homogeneous complexes and proving some technical results that will be needed in the proof.  In the case of vector bundles, a reference for the material in this section is \cite{MR498572}. Then in section \ref{S:section5} we collect some basic results about moduli of point objects in derived categories which enables us to make precise the meaning of the stabilizer $\mathbf{S}$.  Finally in section \ref{S:section6} we put the technical ingredients together and complete the proof of \ref{T:mainthm}.  As noted above we also include a discussion in section \ref{S:section7} of the relationship with Polishchuk's work on Lagrangian pairs.

\subsection{Acknowledgements}
The authors thank A. Polishchuk for helpful comments including pointing out  the connection with the work in \cite{Polishchuk, Polishchuk2, Polishchuk3}.
de Jong partially supported by 
 NSF grant DMS-2052750 FRG: Collaborative Research: Derived Categories, Moduli
Spaces, and Classical Algebraic Geometry, and Olsson partially supported by NSF
grant DMS-1902251 Derived categories and other invariants of algebraic varieties.

\section{Homogeneous complexes}\label{S:section2}

Let $A$ be an abelian variety over an algebraically closed  field $k$.

\begin{defn} A \emph{homogeneous complex} on $A$ is a nonzero object $K\in D(A)$ such that for every $a\in A(k)$ we have
$$
t_a^*K\simeq K,
$$
where $t_a:A\rightarrow A$ is translation by $a$.
\end{defn}

In the case when $K$ is a sheaf we obtain the usual notion of a homogeneous vector bundle \cite{MR498572}.

\begin{pg}
Note that if $K$ is a homogeneous complex then each cohomology sheaf $\mls H^i(K)$ is a homogeneous vector bundle on $A$.  Let
\begin{equation}\label{E:1.2.1}
\Phi :D(A)\rightarrow D(A^t)
\end{equation}
be the equivalence of derived categories given by the Poincar\'e bundle.  Then $\Phi (\mls H^i(K))$ is set-theoretically supported on a finite set of points of $A^t$ \cite[4.19]{MR498572}.  It follows that we have a decomposition
$$
\Phi (K) = \oplus _{{\mls L}\in A^t(k)} K^t_{\mls L},
$$
where $K^t_{\mls L}$ is a complex on $A^t$ supported at $[{\mls L}]\in A^t(k)$.  Applying the inverse transform we have
\begin{equation}\label{E:1.2.2}
K \simeq \oplus _{{\mls L}\in A^t(k)}K_{\mls L},
\end{equation}
where 
$$
K_{\mls L}:= \Phi ^{-1}(K^t_{\mls L}).
$$
Now observe that $K_{\mls L}^t$ admits a finite filtration whose successive quotients are of the form $\kappa ([{\mls L}])[s]$ for various integers $s$, and therefore we can endow the complex $K_{\mls L}$ with the structure of a filtered complex whose graded pieces are of the form ${\mls L}[s]$ (note that this structure is not canonical).    Setting
$$
U_{\mls L}:= K_{\mls L}\otimes {\mls L}^{-1}
$$
we obtain a decomposition
\begin{equation}\label{E:decomposition}
K \simeq \oplus _{{\mls L}\in A^t(k)}{\mls L}\otimes U_{\mls L},
\end{equation}
where $U_{\mls L}$ admits a filtration whose successive quotients are of the form $\mls O_A[s]$.
\end{pg}

\begin{lem}\label{L:1.3} For ${\mls L}\neq {\mls L}'$ we have
\begin{equation}\label{E:1.3.1}
\text{\rm Ext}^*({\mls L}\otimes U_{\mls L}, {\mls L}'\otimes U_{{\mls L}'}) = 0,
\end{equation}
and therefore
$$
\text{\rm Ext}^*(K, K)\simeq \oplus _{{\mls L}\in A^t(k)}\text{\rm Ext}^*({\mls L}\otimes U_{\mls L}, {\mls L}\otimes U_{\mls L})\simeq \oplus _{{\mls L}\in A^t(k)}\text{\rm Ext}^*(U_{\mls L}, U_{\mls L}).
$$
\end{lem}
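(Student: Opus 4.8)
The plan is to use the decomposition \eqref{E:decomposition}, $K \simeq \oplus_{\mls L} \mls L \otimes U_{\mls L}$, and reduce the computation of $\Ext$-groups to a statement about $\Ext$-groups between the individual summands. The key point is the vanishing \eqref{E:1.3.1} when $\mls L \neq \mls L'$; once that is established, additivity of $\Ext^*$ in both variables immediately gives
$$
\Ext^*(K,K) \simeq \bigoplus_{\mls L, \mls L'} \Ext^*(\mls L\otimes U_{\mls L}, \mls L'\otimes U_{\mls L'}) \simeq \bigoplus_{\mls L} \Ext^*(\mls L\otimes U_{\mls L}, \mls L\otimes U_{\mls L}),
$$
and the final identification with $\bigoplus_{\mls L} \Ext^*(U_{\mls L}, U_{\mls L})$ is just the observation that tensoring with the line bundle $\mls L$ is an autoequivalence of $D(A)$, hence induces an isomorphism on all $\Ext$-groups.

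So the heart of the matter is \eqref{E:1.3.1}. First I would reduce to the question of showing $\Ext^*(\mls L\otimes U_{\mls L}, \mls L'\otimes U_{\mls L'}) = 0$ for $\mls L \neq \mls L'$. Recall that $U_{\mls L}$ admits a finite filtration with graded pieces of the form $\mls O_A[s]$, so $\mls L\otimes U_{\mls L}$ has a finite filtration with graded pieces $\mls L[s]$, and likewise $\mls L'\otimes U_{\mls L'}$ has graded pieces $\mls L'[s']$. Using the long exact sequences associated to these filtrations (dévissage in both arguments), the vanishing of $\Ext^*(\mls L\otimes U_{\mls L}, \mls L'\otimes U_{\mls L'})$ follows once we know that $\Ext^*(\mls L, \mls L') = 0$ for distinct points $[\mls L], [\mls L'] \in A^t(k)$. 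But $\Ext^i_{D(A)}(\mls L, \mls L') \simeq \h^i(A, \mls L^{-1}\otimes \mls L')$, and $\mls L^{-1}\otimes \mls L'$ is a nontrivial line bundle algebraically equivalent to $0$; it is a standard fact that such a line bundle has vanishing cohomology in all degrees (e.g. by Mumford's computation, or because the Fourier--Mukai transform of $\mls L^{-1}\otimes\mls L'$ is a skyscraper at a nonzero point, whose cohomology is concentrated in degree $d$ but whose global sections on $A$ vanish — more directly, $\chi(A, \mls L^{-1}\otimes\mls L') = 0$ together with the fact that translation-invariance forces all $\h^i$ to vanish or the line bundle to be trivial). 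Alternatively, and perhaps more cleanly, one can argue directly on the Fourier side: $\Phi(\mls L \otimes U_{\mls L}) = K^t_{\mls L}$ is supported at the point $[\mls L]$, and $\Phi(\mls L'\otimes U_{\mls L'}) = K^t_{\mls L'}$ is supported at $[\mls L']$; since $\Phi$ is an equivalence, $\Ext^*(\mls L\otimes U_{\mls L}, \mls L'\otimes U_{\mls L'}) \simeq \Ext^*_{D(A^t)}(K^t_{\mls L}, K^t_{\mls L'})$, and $\Ext$-groups between complexes with disjoint supports vanish.

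I expect the main (really only) obstacle to be organizing the dévissage cleanly, or — if one takes the Fourier-side route — making precise the claim that $\Ext$ vanishes between complexes supported at distinct closed points; this is standard but worth stating carefully, since $K^t_{\mls L}$ and $K^t_{\mls L'}$ are perfect complexes and one is using that $\romanHom$ in the derived category between objects with disjoint (set-theoretic) support is zero, which follows from the local-to-global spectral sequence or simply from the fact that such objects live in orthogonal subcategories $D_{\{[\mls L]\}}(A^t)$ and $D_{\{[\mls L']\}}(A^t)$. Everything else is formal: the second isomorphism in the lemma is the orthogonal decomposition induced by \eqref{E:decomposition}, and the third is invariance of $\Ext$ under the autoequivalence $-\otimes \mls L$.
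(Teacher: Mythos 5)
Your primary argument --- d\'evissage through the filtrations on $U_{\mls L}$ and $U_{\mls L'}$, reducing \eqref{E:1.3.1} to the standard vanishing $\Ext^*(\mls L,\mls L')\simeq \h^*(A,\mls L^{-1}\otimes\mls L')=0$ for a nontrivial line bundle algebraically equivalent to zero, with the remaining isomorphisms formal --- is exactly the paper's proof. The Fourier-side alternative (orthogonality of complexes supported at distinct points of $A^t$) is a correct variant but adds nothing the paper needs.
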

\begin{proof}
To prove the vanishing \eqref{E:1.3.1} note that since $U_{\mls L}$ and $U_{{\mls L}'}$ admit filtrations whose successive quotients are of the form $\mls O_A[s]$ it suffices to show that 
$$
\text{\rm Ext}^*({\mls L}, {\mls L}')\simeq H^*(A, {\mls L}^{-1}\otimes {\mls L}') = 0,
$$
which follows from the fact that ${\mls L}^{-1}\otimes {\mls L}'$ is a nontrivial line bundle algebraically  equivalent to $0$.
\end{proof}

\begin{lem} Let $K$ be a homogeneous complex on $A$.

(i) Each $U_{\mls L}$ in the decomposition above is, up to a shift, a sheaf if and only if $\text{\rm Ext}^i(K, K) = 0$ for $i<0$.

(ii) If $\text{\rm Ext}^i(K, K) = 0$ for $i<0$ and $\text{\rm Ext}^0(K, K) = k$ then $K$ is a line bundle, up to a shift.
\end{lem}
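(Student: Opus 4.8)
The plan is to feed everything through the decomposition \eqref{E:decomposition} and Lemma~\ref{L:1.3}, reducing both assertions to the corresponding statements for a single $U_{\mls L}$, and then to exploit the one piece of structure at hand: that $U_{\mls L}$ admits a filtration whose successive quotients are shifts of $\mls O_A$. The key preliminary fact, which I would establish first, is that \emph{every cohomology sheaf $\mls H^n(U_{\mls L})$ is a successive extension of copies of $\mls O_A$}; in particular, when it is nonzero it has $\mls O_A$ both as a subsheaf and as a quotient. To prove this, run the spectral sequence of the filtered complex $U_{\mls L}$ whose graded pieces are the $\mls O_A[s]$. Each $E_1$-term is $0$ or $\mls O_A$, and since $A$ is proper over $k$ we have $\Gamma(A,\mls O_A)=k$, so every $\mls O_A$-linear map $\mls O_A\to\mls O_A$ is a scalar; hence every differential between two nonzero terms is a scalar, all kernels and images -- and therefore all $E_r$-terms and all $E_\infty$-terms -- are $0$ or $\mls O_A$, and $\mls H^n(U_{\mls L})$ carries a finite filtration with these as graded pieces.

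For (i), the ``only if'' direction is immediate: if $U_{\mls L}\simeq V_{\mls L}[n_{\mls L}]$ with $V_{\mls L}$ a sheaf, then $\Ext^i(U_{\mls L},U_{\mls L})\cong\Ext^i_{\mls O_A}(V_{\mls L},V_{\mls L})=0$ for $i<0$, so $\Ext^i(K,K)=0$ for $i<0$ by \ref{L:1.3}. For ``if'', fix $\mls L$ with $U_{\mls L}\neq 0$; by \ref{L:1.3} we may assume $\Ext^i(U_{\mls L},U_{\mls L})=0$ for $i<0$. Let $[a,b]$ be the cohomological amplitude of $U_{\mls L}$ and suppose $a<b$. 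Using the preliminary fact, choose a nonzero map $\psi\colon\mls H^b(U_{\mls L})\twoheadrightarrow\mls O_A\hookrightarrow\mls H^a(U_{\mls L})$ and consider the composite
\[
U_{\mls L}\longrightarrow\tau_{\geq b}U_{\mls L}=\mls H^b(U_{\mls L})[-b]\xrightarrow{\psi[-b]}\mls H^a(U_{\mls L})[-b]=(\tau_{\leq a}U_{\mls L})[a-b]\longrightarrow U_{\mls L}[a-b],
\]
whose outer arrows are truncation maps, hence isomorphisms on $\mls H^b$, resp.\ on $\mls H^a$, as $b$, resp.\ $a$, is the top, resp.\ the bottom, cohomological degree. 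On $\mls H^b$ the whole composite induces $\psi$, so it is a nonzero element of $\Ext^{a-b}(U_{\mls L},U_{\mls L})$ with $a-b<0$ -- contradiction. Hence $a=b$ and $U_{\mls L}\simeq\mls H^a(U_{\mls L})[-a]$ is a sheaf up to shift.

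For (ii), by (i) every nonzero $U_{\mls L}$ is $V_{\mls L}[n_{\mls L}]$ with $V_{\mls L}$ a nonzero successive extension of copies of $\mls O_A$, and \ref{L:1.3} gives $\Ext^0(K,K)\cong\bigoplus_{\mls L}\End(V_{\mls L})$. Each $\End(V_{\mls L})$ contains $\id_{V_{\mls L}}\neq 0$, so $\dim_k\Ext^0(K,K)=1$ forces exactly one nonzero $U_{\mls L}$, with bundle $V$ say, and $\End(V)=k$. If $\operatorname{rk}V\geq 2$ then the composite $e\colon V\twoheadrightarrow\mls O_A\hookrightarrow V$ (top quotient and bottom sub of a filtration of $V$ by copies of $\mls O_A$) is nonzero but factors through a rank-one sheaf, hence is neither zero nor invertible, so is not a scalar; then $\id$ and $e$ are linearly independent in $\End(V)$, contradicting $\End(V)=k$. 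Thus $\operatorname{rk}V=1$, so $V\cong\mls O_A$, and $K\simeq\mls L\otimes\mls O_A[n_{\mls L}]$ is a line bundle up to shift.

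I expect the only genuine obstacle to be the preliminary fact: what is needed is not merely that the $\mls H^n(U_{\mls L})$ are homogeneous vector bundles, but that they are iterated self-extensions of $\mls O_A$, so that $\mls O_A$ is simultaneously available as a sub- and as a quotient-sheaf; once that is in hand, the map $\mls H^b\to\mls H^a$ used in (i) and the rank-one conclusion in (ii) are formal. The spectral-sequence bookkeeping above disposes of it, the crucial point being the rigidity forced by $\Gamma(A,\mls O_A)=k$.
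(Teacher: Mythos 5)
Your argument is correct, but it runs on the opposite side of the Fourier transform from the paper's. The paper passes to $K_{\mls L}^t=\Phi (K_{\mls L})$, a complex of finite-length modules supported at the point $[\mls L]\in A^t$: there a nonzero map from the top cohomology module to the bottom one exists for free (both are finite-length modules at the same point), its shift gives a nonzero negative self-extension contradicting (i), and Mukai's inversion result \cite[4.12]{MR498572} is then invoked to transfer ``concentrated in one degree'' back to $K_{\mls L}$; for (ii), length $>1$ of $K^t_{\mls L}$ produces non-scalar endomorphisms. You instead stay on $A$ and work directly with $U_{\mls L}$, which requires the extra lemma that each $\mls H^n(U_{\mls L})$ is an iterated extension of copies of $\mls O_A$; your spectral-sequence argument for this is sound (the point being that all $E_1$-terms are $0$ or $\mls O_A$ and all differentials are scalars by $\Gamma (A,\mls O_A)=k$, so every page has the same shape), and it is the right way to get $\mls O_A$ simultaneously as a sub and a quotient of the relevant cohomology sheaves --- a naive induction on the filtration via long exact sequences would not obviously give this. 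After that, your composite $U_{\mls L}\to \tau _{\geq b}U_{\mls L}\to (\tau _{\leq a}U_{\mls L})[a-b]\to U_{\mls L}[a-b]$ is the same mechanism as the paper's map $K^t_{\mls L}\to K^t_{\mls L}[b-a]$, checked to be nonzero on top cohomology, and your rank-one argument in (ii) mirrors the paper's length-one argument. What your route buys is independence from \cite[4.12]{MR498572} and from any further properties of the transform beyond the decomposition \eqref{E:decomposition} and Lemma \ref{L:1.3}; what it costs is the preliminary unipotence lemma, which the paper gets for free on the dual side.
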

\begin{proof}
The ``only if'' part of statement (i) is immediate.    To see the ``if'' direction we proceed as follows.
With notation as in \eqref{E:1.2.2} we have
$$
\text{\rm Ext}^*(U_{\mls L}, U_{\mls L})\simeq \text{\rm Ext}^*_{A^t}(K_{\mls L}^t, K^t_{\mls L}).
$$
Assume that $K_{\mls L}^t$ is concentrated in more than one degree, and let $a$ (resp. $b$) be the degree of the top (resp. bottom) cohomology of $K_{\mls L}^t$.  So $a>b$ and $H^a(K_{\mls L}^t)$ and $H^b(K_{\mls L}^t)$ are finite length modules supported at the point $[{\mls L}]\in A^t$.  Since these are finite length modules over the same point there exists a nonzero morphism
$$
\varphi :H^a(K_{\mls L}^t)\rightarrow H^b(K_{\mls L}^t).
$$
Such a morphism defines a nonzero morphism
$$
K^t_{\mls L}\rightarrow K^t_{\mls L}[b-a],
$$
or equivalently a nonzero element of
$$
\text{\rm Ext}^{b-a}(K^t_{\mls L}, K^t_{\mls L})
$$
contradicting the vanishing of the negative $\text{\rm Ext}$-groups.  It follows that $K_{\mls L}^t$ is supported in a single degree. By \cite[4.12]{MR498572} it follows that $K_{\mls L}$, and therefore also $U_{\mls L}$, is also supported in a single degree proving (i).

To see (ii) note that if $K_{\mls L}^t$ has length $>1$ then there exists non-scalar endomorphisms of $K_{\mls L}^t$ and therefore also non-scalar endomorphisms of $K_{\mls L}$.  Therefore each $K^t_{\mls L}$ is, up to a shift, the skyscraper sheaf associated to the point $[{\mls L}]$, and $K_{\mls L} \simeq {\mls L}$, up to a shift. Furthermore, since $\text{Ext}^0(K, K) = k$ there is only one factor in the decomposition \eqref{E:decomposition} proving (ii).
\end{proof}

\section{Semi-homogeneous complexes}\label{S:section3}

\begin{pg}
Let $k$ be an algebraically closed field and fix a diagram of abelian varieties over $k$
\begin{equation}\label{E:setup}
\xymatrix{
\mathbf{S}\ar[d]_-{u}\ar[r]^-{v}& A^t\\
A,&}
\end{equation}
where $u$ is surjective.
\end{pg}

\begin{defn}\label{D:2.1} A complex $K\in D(A)$ is \emph{semi-homogeneous with respect to \eqref{E:setup}} if for every  $s\in \mathbf{S}(k)$ we have
\begin{equation}\label{E:2.2.1}
t_{u(s)}^*K\simeq K\otimes {\mls L}_{v(s)},
\end{equation}
where $\mls L_{v(s)}$ is the line bundle corresponding to $v(s)$.
\end{defn}

To study semi-homogeneous complexes we will consider additional geometric structure on \eqref{E:setup} as follows (a priori such additional data may not exist but see \ref{L:dataexist} below).  
\begin{data}\label{P:3.2} 
(i) A commutative diagram
\begin{equation}\label{E:2.5.1}
\xymatrix{
\mathbf{S}\ar[rd]_-{\pi ^t\circ v}\ar[r]^-{u'}\ar@/^2pc/[rr]^-u& A'\ar[d]^-{\alpha '}\ar[r]^-\pi & A\\
& A^{\prime t},&}
\end{equation}
where $\pi $ is an isogeny and $\alpha ':A'\rightarrow A^{\prime t}$ is a morphism of abelian varieties.

(ii) A line bundle $\mls N'$ on $A'$ such that the map $\alpha '$ is equal to the map
$$
\lambda _{\mls N'}:A'\rightarrow A^{\prime t}, \ \ a'\mapsto t_{a'}^*\mls N'\otimes \mls N^{\prime -1}.
$$
\end{data}

\begin{pg}\label{P:3.4}
Let $K$ be a nonzero semi-homogeneous complex with respect to \eqref{E:setup}.  Then for every integer $i$ the sheaf $\mls H^i(K)$ is a semi-homogeneous sheaf and therefore isomorphic to a vector bundle, since the semi-homogeneous condition implies that the maximal open in $A$ over which $\mls H^i(K)$ is locally free is translation invariant.  Fix an integer $i$ for which $\mls H^i(K)$ is nonzero, let $r$ be its rank, and let $\mls M$ denote the determinant of $\mls H^i(K)$.  Taking determinants of the isomorphism \eqref{E:2.2.1} we then obtain that for every $s\in \mathbf{S}(k)$ we have
$$
t_{u(s)}^*\mls M\simeq \mls M\otimes {\mls L}_{v(s)}^{\otimes r}.
$$
In other words, if 
$$
\lambda _{\mls M}:A\rightarrow A^t, \ \ a\mapsto t_a^*\mls M\otimes \mls M^{-1}
$$
denotes the homomorphism defined by $\mls M$ then the diagram
\begin{equation}\label{E:2.4.1}
\xymatrix{
\mathbf{S}\ar[d]^-u\ar[r]^-{v}& A^t\ar[d]^-{[r]}\\
A\ar[r]^-{\lambda _{\mls M}} & A^t}
\end{equation}
commutes, where $[r]:A^t\rightarrow A^t$ denotes multiplication by $r$.
\end{pg}

\begin{lem}\label{L:dataexist} If there exists a non-zero complex $K$ semi-homogeneous with respect to \eqref{E:setup} then there exists data as in \ref{P:3.2}.
\end{lem}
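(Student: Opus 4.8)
The plan is to realise $A'$ as a sub-abelian variety of $A\times A^t$ and to extract $\mls N'$ from the symmetry of a naturally defined homomorphism $A'\to A^{\prime t}$; the identity recorded in \ref{P:3.4} is exactly what makes this work.

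Since $K\neq 0$, fix an integer $i$ with $\mls H^i(K)\neq 0$, and let $r\geq 1$ be its rank and $\mls M:=\det\mls H^i(K)$. By \ref{P:3.4} the square \eqref{E:2.4.1} commutes, i.e. $[r]\circ v=\lambda_{\mls M}\circ u$ as homomorphisms $\mathbf{S}\to A^t$. I would set $A'$ to be the scheme-theoretic image of $(u,v)\colon\mathbf{S}\to A\times A^t$ — a reduced connected closed subgroup of $A\times A^t$, hence an abelian variety — and let $\pi\colon A'\to A$ and $q\colon A'\to A^t$ be the two projections and $u'\colon\mathbf{S}\to A'$ the factorisation of $(u,v)$ through its image, so that $\pi\circ u'=u$ and $q\circ u'=v$. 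Restricting $[r]\circ v=\lambda_{\mls M}\circ u$ to the image yields the identity
\[
[r]\circ q=\lambda_{\mls M}\circ\pi\qquad\text{on }A'.
\]
To see that $\pi$ is an isogeny: its kernel is contained in $\{0\}\times v(\ker u)$, and for $s\in\ker u$ the displayed identity gives $[r]\,v(s)=\lambda_{\mls M}(0)=0$, so $\ker\pi\subseteq\{0\}\times A^t[r]$ is finite; combined with surjectivity of $\pi$ (which follows from that of $u$), $\pi$ is an isogeny. This gives the diagram of \ref{P:3.2}(i) together with the curved arrow $u=\pi\circ u'$.

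Now put $\alpha':=\pi^t\circ q\colon A'\to A^{\prime t}$, so that $\alpha'\circ u'=\pi^t\circ v$, which is the remaining triangle of \eqref{E:2.5.1}. The key step — and the one I expect to be the main obstacle to formalise cleanly — is that $\alpha'$ is symmetric, i.e. $\alpha'=(\alpha')^t$ under the canonical identification $A^{\prime tt}=A'$. Post-composing with $[r]$ and using the displayed identity together with the symmetry $\lambda_{\mls M}^t=\lambda_{\mls M}$, one computes
\[
[r]\circ\alpha'=\pi^t\circ[r]\circ q=\pi^t\circ\lambda_{\mls M}\circ\pi=\bigl(\lambda_{\mls M}\circ\pi\bigr)^t\circ\pi=\bigl([r]\circ q\bigr)^t\circ\pi=[r]\circ(\alpha')^t,
\]
so $[r]\circ\bigl(\alpha'-(\alpha')^t\bigr)=0$. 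Since $A'$ is connected and $A^{\prime t}[r]$ is finite, $\alpha'-(\alpha')^t$ has trivial image, i.e. $\alpha'=(\alpha')^t$. By the standard identification of $\mathrm{NS}(A')$ with the group of symmetric homomorphisms $A'\to A^{\prime t}$ — under which every symmetric homomorphism is of the form $\lambda_{\mls N'}$ for a line bundle $\mls N'$ on $A'$ — there is such an $\mls N'$ with $\alpha'=\lambda_{\mls N'}$, which is \ref{P:3.2}(ii).

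Apart from the symmetry of $\alpha'$, everything is routine bookkeeping with projections, transposes and isogenies. One small point to spell out in the writeup is that $r$ and $\mls M$ depend on the chosen cohomology sheaf of $K$; since any $i$ with $\mls H^i(K)\neq 0$ serves and $K\neq 0$, this causes no difficulty.
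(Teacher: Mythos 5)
Your argument is correct, and its first half is in substance the paper's: the paper takes $A'$ to be the connected component of the identity in the fiber product $A\times_{\lambda_{\mls M},A^t,[r]}A^t$, with $\pi$ and $u'$ coming from the projections and the commutative square \eqref{E:2.4.1}, and sets $\alpha'=\pi^t\circ\alpha$ with $\alpha$ the second projection; your scheme-theoretic image of $(u,v)$ lies inside that fiber product (again by \eqref{E:2.4.1}), surjects onto $A$ with finite kernel, and therefore is the same abelian variety, with the same $\pi$, $u'$ and $\alpha'$. Where you genuinely diverge is in producing $\mls N'$. The paper never addresses symmetry of $\alpha'$: it checks, by precomposing with the surjective $u'$, that $[r]\circ\alpha'=\lambda_{\pi^*\mls M}$, deduces $A'[r]\subset\mathrm{Ker}(\lambda_{\pi^*\mls M})$, and applies the divisibility theorem \cite[Theorem 3, p.~214]{Mumford} to get an $r$-th root $\mls N'$ of $\pi^*\mls M$, concluding with the same ``equal after composing with $[r]$, hence equal'' argument you use at the end. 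You instead prove $\alpha'=(\alpha')^t$ and invoke the identification of $\mathrm{NS}(A')$ with the symmetric homomorphisms $A'\to A^{\prime t}$; this is valid here since $k$ is algebraically closed (in any characteristic), but it is a heavier classical input than the $r$-th-root theorem, and it is precisely the step you flagged as the one needing care --- the paper's device of dividing $\pi^*\mls M$ by $r$ sidesteps the symmetry discussion entirely. Since the lemma only asserts the existence of some data as in \ref{P:3.2} (minimality of $\deg\pi$ is imposed only afterwards), your variant feeds into the rest of the paper without any changes.
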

\begin{proof} We proceed with notation as in \ref{P:3.4}.
To obtain the diagram \eqref{E:2.5.1} in (i), let $A'$ be the connected component of the identity in
$$
A\times _{{\lambda _{\mls M}} , A^t, [r]}A^t,
$$
let $\alpha :A'\rightarrow A^t$ (resp. $\pi $) be the map given by the second (resp. first) projection, and let $u'$ be the map induced by the commutative square \eqref{E:2.4.1}.  Setting $\alpha':= \pi ^t\circ \alpha $ we then get the desired commutative diagram.

For the existence of $\mls N'$ in \ref{P:3.2} (ii) note that the composition
$$
\xymatrix{
A'\ar[r]^-{\alpha '}&A^{\prime t}\ar[r]^-{[r]}& A^{\prime t}}
$$
is equal to the map $\lambda _{\mls M'}$, where $\mls M':= \pi ^*\mls M$.  Indeed the precomposition of this map with the surjective $u':\mathbf{S}\rightarrow A'$ is the map (using the commutativity of \eqref{E:2.4.1} and \eqref{E:2.5.1})
$$
[r]\circ \pi ^t\circ v = \pi ^t\circ [r]\circ v = {\lambda _{\mls M}} \circ u.
$$
It follows that $A'[r]\subset \text{Ker}(\lambda _{\mls M'})$, and therefore by \cite[Theorem 3 on p. 214]{Mumford}  the line bundle $\mls M'$ has an $r$-th root $\mls N'$.  The two maps 
$\lambda _{\mls N'}, \alpha ':A'\rightarrow A^{\prime t}$
have equal compositions with $[r]:A^{\prime t}\rightarrow A^{\prime t}$, and therefore must be equal.
\end{proof}

\begin{lem}\label{L:2.5}
Fix \eqref{E:setup} and data \ref{P:3.2}, and let $K$ be a semi-homogeneous complex with respect to \eqref{E:setup}.

(i)  For any $a'\in A'(k)$ we have 
$$
t_{a'}^*\pi ^*K\simeq \pi ^*K\otimes {\mls L}_{\alpha '(a')}.
$$

(ii) There is a decomposition
\begin{equation}\label{E:2.5.2}
\pi ^*K\simeq \oplus _{{\mls L}\in A^{\prime t}(k)}U_{\mls L}\otimes {\mls L}\otimes \mls N',
\end{equation}
where $U_{\mls L}$ is a homogeneous complex admitting a filtration whose successive quotients are of the form $\mls O_{A'}[s]$ for various integers $s$.
\end{lem}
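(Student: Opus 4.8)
The plan is to deduce both statements directly from the defining property \eqref{E:2.2.1} of $K$ together with the auxiliary structure recorded in \ref{P:3.2}, reducing part (ii) to the theory of homogeneous complexes developed in Section \ref{S:section2}.

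For (i), I would start from the observation that $\pi$, being a homomorphism of abelian varieties, intertwines translations: for $s\in\mathbf{S}(k)$ one has $\pi\circ t_{u'(s)}=t_{u(s)}\circ\pi$, using $u=\pi\circ u'$. Pulling back the isomorphism $t_{u(s)}^*K\simeq K\otimes\mls L_{v(s)}$ along $\pi$ then yields $t_{u'(s)}^*\pi^*K\simeq\pi^*K\otimes\pi^*\mls L_{v(s)}$. Next I would identify $\pi^*\mls L_{v(s)}\simeq\mls L_{\pi^t(v(s))}$ (the morphism $\pi^t$ is, by definition, the map of dual abelian varieties induced by $\pi^*$ on $\Pic^0$), and use the commutativity of \eqref{E:2.5.1} to rewrite $\pi^t(v(s))=\alpha'(u'(s))$, obtaining
$$
t_{u'(s)}^*\pi^*K\simeq\pi^*K\otimes\mls L_{\alpha'(u'(s))}\qquad\text{for every }s\in\mathbf{S}(k).
$$
Finally I would note that $u':\mathbf{S}\to A'$ is surjective: its image is a sub-abelian variety of $A'$ which maps onto $A$ under the isogeny $\pi$, hence has dimension $\dim A=\dim A'$ and so equals $A'$. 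As $k$ is algebraically closed, every $a'\in A'(k)$ is of the form $u'(s)$, and (i) follows.

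For (ii), I would put $K'':=\pi^*K\otimes\mls N^{\prime-1}$, which is nonzero because $K\neq 0$, $\pi$ is faithfully flat, and twisting by a line bundle is an equivalence. By \ref{P:3.2}(ii), $\alpha'=\lambda_{\mls N'}$, so $\mls L_{\alpha'(a')}\simeq t_{a'}^*\mls N'\otimes\mls N^{\prime-1}$, and combining this with (i) one computes, for every $a'\in A'(k)$,
$$
t_{a'}^*K''\simeq\pi^*K\otimes\mls L_{\alpha'(a')}\otimes t_{a'}^*\mls N^{\prime-1}\simeq\pi^*K\otimes\mls N^{\prime-1}=K''.
$$
Thus $K''$ is a homogeneous complex on $A'$, and the decomposition \eqref{E:decomposition} of Section \ref{S:section2} gives $K''\simeq\oplus_{{\mls L}\in A^{\prime t}(k)}\mls L\otimes U_{\mls L}$ with each $U_{\mls L}$ admitting a filtration whose successive quotients are of the form $\mls O_{A'}[s]$. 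Twisting by $\mls N'$ then produces $\pi^*K\simeq\oplus_{\mls L}U_{\mls L}\otimes\mls L\otimes\mls N'$, which is \eqref{E:2.5.2}. To finish I would check that each $U_{\mls L}$ is itself homogeneous: since $t_{a'}^*\mls L\simeq\mls L$ for $\mls L\in\Pic^0(A')$ and $t_{a'}^*\mls O_{A'}\simeq\mls O_{A'}$, applying $t_{a'}^*$ to the decomposition gives $K''\simeq\oplus_{\mls L}\mls L\otimes t_{a'}^*U_{\mls L}$, and the $\Ext$-vanishing of Lemma \ref{L:1.3} (whose proof uses only the presence of $\mls O_{A'}[s]$-filtrations) forces any isomorphism between these two decompositions to split as a direct sum of isomorphisms $\mls L\otimes t_{a'}^*U_{\mls L}\simeq\mls L\otimes U_{\mls L}$, whence $t_{a'}^*U_{\mls L}\simeq U_{\mls L}$.

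The computations in (i) are routine but call for some care in unwinding $\pi^*\mls L_{v(s)}\simeq\mls L_{\pi^t(v(s))}$ and chasing the triangles of \eqref{E:2.5.1}. The one ingredient that goes a little beyond what Section \ref{S:section2} already records is the homogeneity---as opposed to merely the $\mls O_{A'}[s]$-filtration---of the pieces $U_{\mls L}$; I expect this to be the main, if minor, obstacle. Besides the argument above it can be seen from the Fourier--Mukai picture of Section \ref{S:section2}: translation on $A'$ corresponds to twisting the Fourier--Mukai transform of $K''$ on $A^{\prime t}$ by a degree-$0$ line bundle, and such a twist fixes each of its direct summands, each of which is supported at a single point where every line bundle is trivial.
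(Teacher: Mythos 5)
Your proposal is correct and follows essentially the same route as the paper: for (i), surjectivity of $u'$ (via $\pi\circ u'=u$), pulling the isomorphism $t_{u(s)}^*K\simeq K\otimes\mls L_{v(s)}$ back along $\pi$, identifying $\pi^*\mls L_{v(s)}\simeq\mls L_{\pi^t\circ v(s)}\simeq\mls L_{\alpha'(a')}$; and for (ii), observing that $\pi^*K\otimes\mls N^{\prime-1}$ is homogeneous and invoking the decomposition \eqref{E:decomposition}. Your explicit verification that each $U_{\mls L}$ is itself homogeneous (via the $\Ext$-vanishing of \ref{L:1.3}, or the Fourier--Mukai picture) is a detail the paper leaves implicit in the construction of \eqref{E:decomposition}, but it is a correct and harmless addition.
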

\begin{proof}  
To see (i), note first of all that $u'$ is surjective since the composition $\pi \circ u' = u$ is surjective by assumption. Given $a'\in A'(k)$ with $\pi (a') = a$ choose $s\in \mathbf{S}(k)$ with $u'(s) = a'$.  Since $K$ is semi-homogeneous with respect to \eqref{E:2.5.1} we then have 
$$
t_{a}^*K\simeq K\otimes \mls L_{v(s)}.
$$
Applying $\pi ^*$ and noting that $\pi ^*t_a^*K\simeq t_{a'}^*\pi ^*K$ and $\pi ^*\mls L_{v(s)}\simeq \mls L_{\pi ^t\circ v(s)}$ we find that
$$
t_{a'}^*\pi ^*K\simeq \pi ^*K\otimes \mls L_{\pi ^t\circ v(s)}.
$$
Statement (i) then follows from the commmutativity of the left triangle in \eqref{E:2.5.1} which implies that
$$
\mls L_{\pi ^t\circ v(s)}\simeq \mls L_{\alpha '(a')}.
$$

Finally (ii) follows from (i) by considering 
 the decomposition \eqref{E:decomposition} for the  homogeneous complex 
$$
\pi ^*K\otimes \mls N^{\prime -1}
$$
on $A'$.
\end{proof}

For data \ref{P:3.2} we can consider the degree of the isogeny $\pi $, an integer $\geq 1$. 

\begin{lem}\label{L:3.5} For data \ref{P:3.2} with $\pi $ of minimal degree the group scheme
\begin{equation}\label{E:2.6.1}
I:= \text{\rm Ker}(\pi )\cap \text{\rm Ker}(\alpha ')
\end{equation}
is $0$.
\end{lem}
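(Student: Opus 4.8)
The plan is to argue by contradiction: assuming $I:=\mathrm{Ker}(\pi )\cap \mathrm{Ker}(\alpha ')\neq 0$, I will divide $A'$ by a small subgroup scheme of $I$ and produce data as in \ref{P:3.2} for \eqref{E:setup} whose isogeny has strictly smaller degree, contradicting the minimality of $\deg (\pi )$.

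First I would choose a subgroup scheme $I_0\subseteq I$ of prime order, which exists because $I$ is a nonzero finite commutative group scheme over the algebraically closed field $k$. Since $I_0\subseteq \mathrm{Ker}(\alpha ')=\mathrm{Ker}(\lambda _{\mls N'})$, the Weil pairing $e^{\mls N'}$ on $\mathrm{Ker}(\lambda _{\mls N'})$ restricts to an alternating pairing on $I_0$, which is trivial because $I_0$ has prime order; thus $I_0$ is isotropic for $e^{\mls N'}$. Set $B:=A'/I_0$ with quotient $q\colon A'\to B$. Since $I_0\subseteq \mathrm{Ker}(\pi )$, the isogeny $\pi$ factors as $\pi =\pi ''\circ q$ with $\pi ''\colon B\to A$ of degree $\deg (\pi )/\deg (I_0)<\deg (\pi )$, and $u'':=q\circ u'\colon \mathbf{S}\to B$ satisfies $\pi ''\circ u''=u$.

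The heart of the argument is to produce $\alpha ''\colon B\to B^t$ of the form $\lambda _{\mls N''}$ fitting into the new diagram. Because $I_0\subseteq \mathrm{Ker}(\alpha ')$, the homomorphism $\alpha '$ factors through $q$, say $\alpha '=\bar\alpha\circ q$ with $\bar\alpha\colon B\to A^{\prime t}$; dualizing and using that $\alpha '=\lambda _{\mls N'}$ is symmetric gives $\alpha '=(\alpha ')^t=q^t\circ \bar\alpha ^t$, so $\alpha '$ also factors through $q^t\colon B^t\to A^{\prime t}$. The map $\bar\alpha ^t$ carries $\mathrm{Ker}(q)=I_0$ into $\mathrm{Ker}(q^t)$, which is the Cartier dual $\widehat{I_0}$ of $I_0$, and the induced homomorphism $I_0\to \widehat{I_0}$ is the one attached to the pairing $e^{\mls N'}|_{I_0}$; by the isotropy above it vanishes, so $\bar\alpha ^t$ descends to $\alpha ''\colon B\to B^t$ with $q^t\circ \alpha ''\circ q=\alpha '$. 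A transpose computation, using that $q$ is surjective and that a homomorphism from the abelian variety $B$ into the finite group $\mathrm{Ker}(q^t)$ vanishes, shows $\alpha ''$ is symmetric, and since $k$ is algebraically closed a symmetric homomorphism $B\to B^t$ equals $\lambda _{\mls N''}$ for some line bundle $\mls N''$ on $B$. Finally, composing the desired identity $\alpha ''\circ u''=(\pi '')^t\circ v$ with $q^t$ turns it into $\alpha '\circ u'=\pi ^t\circ v$, which holds by \ref{P:3.2}(i); as the difference of the two sides is a homomorphism $\mathbf{S}\to \mathrm{Ker}(q^t)$, it is zero. Hence $(\pi '',\alpha '',\mls N'')$ is data as in \ref{P:3.2} with $\deg (\pi '')<\deg (\pi )$, the desired contradiction.

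The step I expect to be the main obstacle is the construction of $\alpha ''$ — concretely, checking that $\bar\alpha ^t$ kills $I_0$. That is exactly where the isotropy of $I_0$, and hence the reduction to prime order, is used, and it relies on the standard identification of the pairing induced by a symmetric homomorphism with the Weil pairing of the corresponding line bundle; the remaining verifications are formal manipulations with duals of isogenies.
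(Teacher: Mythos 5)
Your overall strategy is the same as the paper's: quotient $A'$ by a small subgroup of $I$ on which the pairing vanishes, and contradict minimality of $\deg(\pi)$. Your verifications of the formal steps (factoring $\pi$ through $q$, symmetry of $\alpha''$ by composing with $q^t$ and using that a homomorphism from a connected group to the finite group scheme $\mathrm{Ker}(q^t)$ vanishes, and the final commutativity check) match the paper's. Your one genuine variant is at the end: instead of descending the line bundle $\mls N'$ to $B=A'/I_0$ via Mumford's descent theorem \cite[Theorem 2, p.~213]{Mumford}, you descend the homomorphism $\alpha'$ and then invoke the fact that over an algebraically closed field every symmetric homomorphism $B\to B^t$ is $\lambda_{\mls N''}$ for some line bundle. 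That is legitimate (and suffices for Data \ref{P:3.2}, which only asks for some $\mls N''$ with $\lambda_{\mls N''}=\alpha''$), though it trades one standard input for a heavier one, and it still hinges on exactly the same isotropy statement, now needed to see that $\bar\alpha^t$ kills $I_0$.

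The genuine gap is your justification of that isotropy: ``the alternating pairing on $I_0$ is trivial because $I_0$ has prime order'' is false as a general statement about finite group schemes. In characteristic $2$ the group scheme $\alpha_2$ carries the nontrivial alternating bihomomorphism $e(r,s)=1+rs$ (biadditive because $r^2=s^2=0$, and $e(r,r)=1+r^2=1$), and nothing prevents $I_0\simeq\alpha_2$ here; note the lemma is meant to hold in all characteristics, since the paper uses characteristic $0$ only in \ref{P:6.1}. Even for odd $p$ the case $I_0\simeq\alpha_p$ requires an argument specific to $\alpha_p$ (it is not a formal consequence of prime order), although there the conclusion does follow from alternation. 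What actually saves the statement in the $\alpha_2$ case is not ``prime order plus alternating'' but the fact that $e^{\mls N'}$ is the commutator pairing of the theta group of $\mls N'$, i.e.\ arises from a central extension of $\mathrm{Ker}(\lambda_{\mls N'})$ by $\mathbf{G}_m$; Mumford's Lemma 1 on p.~207 of \cite{Mumford}, which is precisely the input the paper's proof cites, shows such a pairing dies on any subgroup scheme of order $p$ (with constituents $\mathbf{Z}/p$, $\mu_p$, $\alpha_p$). So in characteristic $0$, or whenever $I$ is \'etale (where your cyclic-generator argument works), your proof is fine; in characteristic $p$, and specifically for $p=2$, you must replace your isotropy claim by an appeal to the commutator-pairing property (Mumford's Lemma 1), and correspondingly make the ``standard identification'' you invoke precise: the induced map $I_0\to\widehat{I_0}$ must be identified with the commutator pairing $e^{\mls N'}$, not merely with some symmetric pairing attached to $\alpha'$. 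The paper sidesteps this identification entirely by descending $\mls N'$ itself, for which the descent criterion only needs $e^{\mls N'}|_{I'}=1$.
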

\begin{proof}
Assume to the contrary that the group scheme $I$ is nontrivial.
We show that there exists data \ref{P:3.2} with $\pi $ of smaller degree.  Since $\alpha '$ is defined by a line bundle there is a skew-symmetric pairing (see for example \cite[p. 205]{Mumford})
$$
e:\text{Ker}(\alpha ')\times \text{Ker}(\alpha ')\rightarrow \mathbf{G}_m.
$$

There is a nonzero subgroup scheme $I'\subset I$ such that the restriction of $e$ to $I'$ is identically zero.  This is clear if $I(k)\neq 0$ from the skew-symmetry of $e$: simply choose a nonzero element $x\in I(k)$ and let $I'$ denote the cyclic subgroup generated by $x$.  If $I$ is a local group scheme note that $I$ contains a subgroup scheme $I'\subset I$ of order the characteristic $p$ of $k$.  Indeed $I[p]$ is nonzero since $I$ is local, and is contained in the group scheme $A'[p]$ whose simple constituents are given by $\mathbf{Z}/(p)$, $\mu _p$, and $\alpha _p$.  It then follows from  \cite[Lemma 1, p. 207]{Mumford} that the restriction of $e$ to this $I'$ is zero.  See also \cite[Proof of Lemma 3, p. 216]{Mumford} for a similar argument.

Let $A^{\prime \prime }$ denote $A'/I'$.  By \cite[Theorem 2, p. 213]{Mumford} the line bundle $\mls N'$ descends to a line bundle $\mls N^{\prime \prime }$ on $A^{\prime \prime }$.  Let
$\alpha ^{\prime \prime }=\lambda _{\mls N^{\prime \prime }}:A^{\prime \prime }\rightarrow A^{\prime \prime t}$
denote the induced homomorphism.  Setting $u^{\prime \prime }:\mathbf{S}\rightarrow A^{\prime \prime }$ equal to $u'$ composed with the projection $q:A'\rightarrow A^{\prime \prime }$ we then obtain a commutative diagram
$$
\xymatrix{
\mathbf{S}\ar@/^2pc/[rrr]^-u\ar[r]^-{u'}\ar[rd]_-{\pi ^t\circ v}& A'\ar[r]^-{q}\ar@/_1pc/[rr]_<<<{\pi }\ar[d]^-{\alpha '}& A^{\prime \prime }\ar[d]^-{\alpha ^{\prime \prime }}\ar[r]^-{\pi ^{\prime \prime }}& A\\
& A^{\prime t}& A^{\prime \prime t},\ar[l]_-{q^t}& }
$$
Now the compositions of the two maps
$$
\pi ^{\prime \prime t}\circ v, \alpha ^{\prime \prime }\circ q\circ u':\mathbf{S}\rightarrow A^{\prime \prime t}
$$
with $q^t$ are equal, and therefore these two maps are also equal.  Denoting $q\circ u^{\prime }$ by $u^{\prime \prime }$ we then obtain a commutative diagram
$$
\xymatrix{
\mathbf{S}\ar[rd]_-{\pi ^{\prime \prime t}\circ v}\ar[r]^-{u^{\prime \prime }}\ar@/^2pc/[rr]^-u& A^{\prime \prime }\ar[d]^-{\alpha ^{\prime \prime }}\ar[r]^-{\pi ^{\prime \prime } }& A\\
& A^{\prime \prime t},&}
$$
and $\text{deg}(\pi ^{\prime \prime })< \text{deg}(\pi )$.
\end{proof}

\begin{thm}\label{T:2.7.1} Fix data \ref{P:3.2} with $\text{\rm deg}(\pi )$ minimal, and let $K$ be a nonzero semi-homogeneous complex with respect to \eqref{E:setup}.  Assume further that $\text{\rm Ker}(\pi )$ is an \'etale group scheme.  Then there exists a homogeneous complex $H\in D(A')$ and a line bundle $\mls N'$ on $A'$ such that $K\simeq \pi _*(H\otimes \mls N')$.
\end{thm}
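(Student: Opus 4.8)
The plan is to pull back along $\pi$, where Lemma~\ref{L:2.5} gives a decomposition, and then to descend back to $A$ using that $\pi$ is a torsor under the finite abelian group $G:=\text{\rm Ker}(\pi)(k)$. (This is the one place the hypothesis that $\text{\rm Ker}(\pi)$ is \'etale enters: it guarantees that $G$ acts on $A'$ by translations and that $\pi:A'\to A$ is a $G$-torsor.) The object $\pi^*K$ then carries its tautological $G$-equivariant descent datum $c=(c_g)_{g\in G}$, with $c_g:t_g^*\pi^*K\xrightarrow{\sim}\pi^*K$, and producing the desired presentation of $K$ amounts to identifying this descent datum. Write $\pi^*K\simeq\bigoplus_{\mls L\in\Sigma}B_{\mls L}$, $B_{\mls L}:=U_{\mls L}\otimes\mls L\otimes\mls N'$, as in Lemma~\ref{L:2.5}(ii), where $\mls N'$ is the line bundle of Data~\ref{P:3.2}, $\Sigma\subset A^{\prime t}(k)$ is finite, and each $U_{\mls L}$ is a nonzero homogeneous complex admitting an $\mls O_{A'}$-filtration. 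Since $\alpha'=\lambda_{\mls N'}$ we have $t_g^*\mls N'\simeq\mls N'\otimes\mls L_{\alpha'(g)}$, so (using that $U_{\mls L}$ is homogeneous and $\mls L$ is algebraically trivial) $t_g^*B_{\mls L}\simeq U_{\mls L}\otimes(\mls L\otimes\mls L_{\alpha'(g)})\otimes\mls N'$. By the argument proving Lemma~\ref{L:1.3}, for $\mls L\neq\mls L'$ in $A^{\prime t}(k)$ one has $\text{\rm Hom}(U\otimes\mls L\otimes\mls N',U'\otimes\mls L'\otimes\mls N')=0$ whenever $U,U'$ admit $\mls O_{A'}$-filtrations; hence $c_g$ must carry $t_g^*B_{\mls L}$ isomorphically onto the summand indexed by $\mls L\otimes\mls L_{\alpha'(g)}$. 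Consequently $\Sigma$ is stable under translation by $\Gamma:=\alpha'(G)$, we have $U_{\mls L}\simeq U_{\mls L\otimes\gamma}$ for all $\gamma\in\Gamma$, and, by Lemma~\ref{L:3.5} (which gives $\text{\rm Ker}(\pi)\cap\text{\rm Ker}(\alpha')=0$), the homomorphism $\alpha'|_G$ is injective, so $G$ acts \emph{freely} on $\Sigma$ by $g\cdot\mls L:=\mls L\otimes\mls L_{\alpha'(g)}$.

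Next I would build the homogeneous complex. Writing $\Sigma=\coprod_{j=1}^{m}G\cdot\mls L_j$ as a union of free $G$-orbits, set $H_j:=U_{\mls L_j}\otimes\mls L_j$ (a homogeneous complex) and $H:=\bigoplus_{j=1}^m H_j$, which is nonzero since $K\neq 0$. Using $U_{\mls L_j}\simeq U_{g\cdot\mls L_j}$, the homogeneity of $H_j$, and $\mls L_{\alpha'(g)}\otimes\mls N'\simeq t_g^*\mls N'$, one checks $t_g^*(H_j\otimes\mls N')\simeq B_{g\cdot\mls L_j}$ for every $g$ and $j$, whence, by flat base change along the $G$-torsor $\pi$,
$$
\pi^*\pi_*(H\otimes\mls N')\ \simeq\ \bigoplus_{g\in G}t_g^*(H\otimes\mls N')\ \simeq\ \bigoplus_{j=1}^{m}\bigoplus_{g\in G}B_{g\cdot\mls L_j}\ =\ \bigoplus_{\mls L\in\Sigma}B_{\mls L}\ \simeq\ \pi^*K .
$$
Moreover the tautological descent datum on $\pi^*\pi_*(H\otimes\mls N')$ permutes these blocks by the \emph{same} rule $\mls L\mapsto\mls L\otimes\mls L_{\alpha'(g)}$ as $c$ does.

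Finally I would compare the two $G$-equivariant structures on $\pi^*K\simeq\pi^*\pi_*(H\otimes\mls N')$. Since they induce the same permutation of the mutually orthogonal summands $\{B_{\mls L}\}_{\mls L\in\Sigma}$, they differ by a class in the pointed set $H^1(G,\text{\rm Aut}(\pi^*K))$. By the orthogonality in Lemma~\ref{L:1.3}, $\text{\rm Aut}(\pi^*K)=\prod_{\mls L\in\Sigma}\text{\rm Aut}(B_{\mls L})$, and $G$ permutes these factors through its free action on $\Sigma$; thus $\text{\rm Aut}(\pi^*K)$ is coinduced from the trivial subgroup and $H^1(G,\text{\rm Aut}(\pi^*K))$ is trivial by Shapiro's lemma. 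Hence the two descent data agree up to isomorphism, and effective descent along $\pi$ yields $K\simeq\pi_*(H\otimes\mls N')$ with $H$ homogeneous, as claimed. The reductions in the first two paragraphs are routine bookkeeping with Lemmas~\ref{L:2.5} and~\ref{L:1.3}; I expect the genuine obstacle to be this last step, namely making precise the equivalence between descent data along $\pi$ and $G$-equivariant structures (and the triviality of the relevant $H^1$) in a way insensitive to whether $|G|$ is divisible by $\mathrm{char}(k)$, where one cannot average.
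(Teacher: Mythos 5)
Most of your argument coincides with the paper's: the orbit decomposition of $\widetilde{\Sigma}(K)$ under the free action of $\mathrm{Ker}(\pi)$ through $\alpha'$ (using \ref{L:3.5}), the definition $H=\oplus_j U_{\mls L_j}\otimes \mls L_j$ over orbit representatives, and the computation $\pi^*\pi_*(H\otimes \mls N')\simeq \oplus_{g}t_g^*(H\otimes \mls N')\simeq \pi^*K$ are exactly the steps in the paper. The genuine gap is the final step, which you yourself flag as the expected obstacle. Showing that the two naive descent data on $\pi^*K$ are cohomologous (your $H^1(G,\mathrm{Aut}(\pi^*K))$/Shapiro computation) does not yield $K\simeq \pi_*(H\otimes\mls N')$: the bounded derived category of coherent sheaves does not satisfy descent at the level of naive, $1$-categorical descent data along a finite \'etale cover, so there is no ``effective descent along $\pi$'' to invoke, and comparing $1$-cocycles up to coboundary ignores the higher coherences that genuine (enhanced) descent would require -- an issue that is real here since the blocks $U_{\mls L}$ may have nonzero negative self-Exts. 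Nor can you repair this by averaging a $G$-equivariant isomorphism of pullbacks into a morphism on $A$, because the theorem only assumes $\mathrm{Ker}(\pi)$ \'etale, so $\mathrm{char}(k)$ may divide $|\mathrm{Ker}(\pi)|$ and the unit $K\to\pi_*\pi^*K$ need not split.

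The fix, which is what the paper does, avoids descent entirely: let $q:\pi^*K\to H\otimes\mls N'$ be the projection onto the sub-sum of blocks indexed by the chosen orbit representatives, and let $\phi:K\to\pi_*(H\otimes\mls N')$ be the morphism obtained from $q$ by adjunction. Computing $\pi^*\phi$ via flat base change along the torsor, one finds $\pi^*\phi=\sum_{b\in\mathrm{Ker}(\pi)}t_b^*(q)$, and this is an isomorphism: by the orthogonality of \ref{L:1.3}, each $t_b^*(q)$ restricts to an isomorphism from the sub-sum of blocks of $\pi^*K$ indexed by $\Sigma(K)\cdot\alpha'(b)$ onto $t_b^*(H\otimes\mls N')$, and by injectivity of $\alpha'$ on $\mathrm{Ker}(\pi)$ these index sets partition $\widetilde{\Sigma}(K)$. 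Since $\pi$ is faithfully flat, $\pi^*$ is conservative, so $\phi$ is an isomorphism. This uses no splitting of the unit, no averaging, and no effectivity of derived descent, and is therefore insensitive to whether $\mathrm{char}(k)$ divides the order of $\mathrm{Ker}(\pi)$. I recommend replacing your last paragraph with this adjunction argument; the bookkeeping in your first two paragraphs then carries over verbatim.
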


\begin{rem} The assumption on $\text{Ker}(\pi )$ holds for example if the ground field $k$ has characteristic $0$.
\end{rem}

Before starting the proof it is convenient to fix some additional notation. 

\begin{pg}\label{P:3.10}
Let $G$ denote the quotient
$$
A^{\prime t}/\alpha '(\text{Ker}(\pi )).
$$
If $K$ is a semi-homogeneous complex on $A$ with respect to \eqref{E:setup} we then have a decomposition \eqref{E:2.5.2} of $\pi ^*K$.  Let 
$$
\widetilde {\Sigma }(K)\subset A^{\prime t}(k)
$$
be the set of those $\mls L$ for which $U_{\mls L}$ is nonzero, and let
$$
\Sigma (K)\subset G(k)
$$
be the image of $\widetilde {\Sigma }(K)$.
\end{pg}

\begin{lem} The set $\widetilde {\Sigma }(K)$ is the preimage under the projection map $A^{\prime t}(k)\rightarrow G(k)$ of $\Sigma (K)$.
\end{lem}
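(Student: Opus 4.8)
The inclusion of $\widetilde{\Sigma}(K)$ in the preimage of $\Sigma(K)$ is immediate from the definition of $\Sigma(K)$, so the content of the lemma is that $\widetilde{\Sigma}(K)$ is saturated for the projection $p\colon A^{\prime t}(k)\to G(k)$; equivalently, that $\widetilde{\Sigma}(K)$ is stable under translation by $\text{\rm Ker}(p)$. Since $G=A^{\prime t}/\alpha'(\text{\rm Ker}(\pi))$ and $A^{\prime t}\to G$ is an isogeny, $\text{\rm Ker}(p)=\alpha'(\text{\rm Ker}(\pi))(k)$, and because $\text{\rm Ker}(\pi)$ is \'etale (we are in the situation of \ref{T:2.7.1}) and $k$ is algebraically closed this coincides with $\{\mls L_{\alpha'(x)}\ :\ x\in\text{\rm Ker}(\pi)(k)\}$. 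Thus it suffices to show that $U_{\mls L}\simeq U_{\mls L\otimes\mls L_{\alpha'(x)}}$ for every $\mls L\in A^{\prime t}(k)$ and every $x\in\text{\rm Ker}(\pi)(k)$.

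The plan is to exploit the translation-invariance of $\pi^*K$ along $\text{\rm Ker}(\pi)$. Since $\pi$ is a homomorphism we have $\pi\circ t_x=t_{\pi(x)}\circ\pi$, so for $x\in\text{\rm Ker}(\pi)(k)$ there is an isomorphism $t_x^*\pi^*K\simeq\pi^*t_{\pi(x)}^*K=\pi^*K$. I would then apply $t_x^*$ to the decomposition \eqref{E:2.5.2} and simplify each factor: $t_x^*U_{\mls L}\simeq U_{\mls L}$ because $U_{\mls L}$ is homogeneous; $t_x^*\mls L\simeq\mls L$ because $\mls L\in A^{\prime t}(k)$ is algebraically trivial; and $t_x^*\mls N'\simeq\mls N'\otimes\mls L_{\alpha'(x)}$ by Data \ref{P:3.2}(ii), since $\alpha'=\lambda_{\mls N'}$. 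This yields
$$\pi^*K\ \simeq\ t_x^*\pi^*K\ \simeq\ \oplus_{\mls L\in A^{\prime t}(k)}U_{\mls L}\otimes(\mls L\otimes\mls L_{\alpha'(x)})\otimes\mls N',$$
and reindexing by $\mls L':=\mls L\otimes\mls L_{\alpha'(x)}$ rewrites the right-hand side as $\oplus_{\mls L'}U_{\mls L'\otimes\mls L_{\alpha'(x)}^{-1}}\otimes\mls L'\otimes\mls N'$.

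To conclude I would appeal to the (essential) uniqueness of the decomposition \eqref{E:2.5.2}: as recalled in the discussion preceding \eqref{E:decomposition}, for a homogeneous complex the summand $\mls L\otimes U_{\mls L}$ is canonically recovered as the inverse Fourier--Mukai transform of the direct summand of $\Phi(\pi^*K\otimes\mls N^{\prime -1})$ supported at $[\mls L]$, so the $U_{\mls L}$ are determined up to isomorphism by $\pi^*K$ alone. Comparing the two expressions above for $\pi^*K$ therefore forces $U_{\mls L'}\simeq U_{\mls L'\otimes\mls L_{\alpha'(x)}^{-1}}$ for all $\mls L'$ and all $x\in\text{\rm Ker}(\pi)(k)$, which gives the required translation-invariance of $\widetilde{\Sigma}(K)$. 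I expect the only delicate point to be making this uniqueness precise --- that the constituents $U_{\mls L}$ depend only on $\pi^*K$ and not on the (non-canonical) filtrations --- after which the rest is bookkeeping with translations and the defining property $\alpha'=\lambda_{\mls N'}$ of $\mls N'$.
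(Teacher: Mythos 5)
Your proof is correct and follows essentially the same route as the paper: reduce to stability of $\widetilde{\Sigma}(K)$ under translation by $\alpha'(\mathrm{Ker}(\pi))(k)$, use $t_x^*\pi^*K\simeq \pi^*K$ together with $t_x^*\mls N'\simeq \mls N'\otimes \mls L_{\alpha'(x)}$ (and translation-invariance of $U_{\mls L}$ and $\mls L$), and then compare the two resulting decompositions. The only cosmetic difference is the final identification step: the paper invokes the Ext-orthogonality of \ref{L:1.3}, while you appeal to the canonical support decomposition under the Fourier--Mukai transform --- interchangeable justifications of the same uniqueness (and your aside about $\mathrm{Ker}(\pi)$ being \'etale is unnecessary, since only $k$-points are needed and $k$ is algebraically closed).
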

\begin{proof}
It suffices to show that $\widetilde {\Sigma }(K)$ is stable under translation by $\text{Ker}(\pi )$.  To see this note that 
for $b\in \text{Ker}(\pi )(k)$ we have an isomorphism
$$
t_b^*\pi ^*K = \oplus _{{\mls L}}t_b^*(U_{\mls L}\otimes {\mls L}\otimes \mls N')\rightarrow  \oplus _{{\mls L}}U_{\mls L}\otimes {\mls L}\otimes \mls N'
$$
giving the natural $\text{Ker}(\pi )$-linearization on $\pi ^*K$.  Now 
$$
t_b^*(U_{\mls L}\otimes {\mls L}\otimes \mls N')\simeq (U_{\mls L}\otimes {\mls L}\otimes t_b^*\mls N'\otimes \mls N^{\prime -1})\otimes \mls N'\simeq U_{\mls L}\otimes {\mls L}\otimes {\mls L}_{\alpha '(b)}\otimes \mls N'
$$
so using \ref{L:1.3} we find that $\widetilde {\Sigma }(K)$ is stable under translation by $\alpha '(b)$.
\end{proof}

\begin{proof}[Proof of \ref{T:2.7.1}]
  Since $\alpha '$ is injective on $\text{Ker}(\pi )$ by \ref{L:3.5} we can write $\widetilde {\Sigma }(K)$ as a disjoint union of $\text{Ker}(\pi )$-orbits, say
$$
\widetilde {\Sigma }(K) = \Sigma (K)\times \text{Ker}(\pi ).
$$
Let $H$ denote the homogeneous complex
$$
H:= \oplus _{{\mls L}\in \Sigma (K)}U_{\mls L}\otimes {\mls L}
$$
so
\begin{equation}\label{E:pulliso}
\pi ^*K\simeq \oplus _{b\in \text{Ker}(\pi )}t_b^*(H\otimes \mls N')
\end{equation}
with $\text{Ker}(\pi )$ acting by permuting the factors.  Projecting onto the factor corresponding to $b=0$ we get a map
$$
q:\pi ^*K\rightarrow H\otimes \mls N'
$$
such that the map \eqref{E:pulliso} is described as the sum 
$$
\sum _{b\in \text{Ker}(\pi )}t_b^*(q):\pi ^*K\rightarrow \oplus _{b\in \text{Ker}(\pi )}t_b^*(H\otimes \mls N').
$$
By adjunction the map $q$ gives a map
$$
\phi :K\rightarrow \pi _*(H\otimes \mls N').
$$
This map $\phi $ is an isomorphism.  Indeed to verify this it suffices to show that it is an isomorphism after applying $\pi ^*$ where we recover the isomorphism \eqref{E:pulliso}.
\end{proof}

\section{Further results about semi-homogeneous complexes}\label{S:section4}

\begin{pg}
In  this section we record some additional technical results we will need for our later arguments.  We continue with the notation of the previous section.   That is, we fix a diagram \eqref{E:setup} as well as additional data \ref{P:3.2}. We will assume that $\pi :A'\rightarrow A$ has minimal degree, so that by \ref{L:3.5} the composition
$$
\xymatrix{
\text{Ker}(\pi )\ar@{^{(}->}[r]& A'\ar[r]^-{\alpha '}& A^{\prime t}}
$$
is injective.  As in the previous section let $G$ denote the quotient
$$
A^{\prime t}/\alpha '(\text{Ker}(\pi )).
$$

So for a complex $K$ semi-homogeneous with respect to \eqref{E:setup} we have subsets (see \ref{P:3.10})
$$
\widetilde {\Sigma }(K)\subset A^{\prime t}(k), \ \ \Sigma (K)\subset G(k).
$$
\end{pg}

\begin{rem} If $K$ is a vector bundle semi-homogeneous with respect to \eqref{E:setup} then from \eqref{E:2.5.2} the vector bundle $\pi ^*K$ is graded by $\widetilde {\Sigma }(K)$.  This grading is not preserved by the $\text{Ker}(\pi )$-linearization of $\pi ^*K$, but the coarser grading by $\Sigma (K)$ is preserved.
By descent theory we therefore obtain a decomposition
$$
K = \oplus _{\sigma \in \Sigma (K)}K_\sigma .
$$
We generalize this to complexes in \ref{L:4.6} below.
\end{rem}



\begin{lem}\label{L:4.3} (i) Let $K, K'\in D(A)$ be semi-homogeneous complexes with respect to \eqref{E:setup}  such that
$$
\Sigma (K)\cap \Sigma (K') = \emptyset.
$$
Then 
$$
\text{\rm Ext}^*(K, K') = 0.
$$

(ii) Let $K,K'$ be semi-homogeneous vector bundles with respect to \eqref{E:setup} for which $\Sigma (K) \cap \Sigma (K') \neq \emptyset $.  Then 
$$
\text{\rm Hom}(K, K')\neq 0.
$$
\end{lem}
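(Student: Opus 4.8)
The plan for both parts is to pull back along the isogeny $\pi\colon A'\to A$, where the decompositions \eqref{E:2.5.2} are available, and then descend the conclusion back to $A$.

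For (i): First I would observe that $\widetilde{\Sigma}(K)$ and $\widetilde{\Sigma}(K')$ are the preimages under $A^{\prime t}(k)\to G(k)$ of $\Sigma(K)$ and $\Sigma(K')$, so the hypothesis $\Sigma(K)\cap\Sigma(K')=\emptyset$ already forces $\widetilde{\Sigma}(K)\cap\widetilde{\Sigma}(K')=\emptyset$. Feeding \eqref{E:2.5.2} for $K$ and $K'$ into $\text{\rm Ext}^*_{A'}(\pi^*K,\pi^*K')$ and cancelling the common factor $\mls N'$ in each term, this group becomes a finite direct sum of groups $\text{\rm Ext}^*_{A'}(U_{\mls L}\otimes\mls L,\,U_{\mls M}\otimes\mls M)$ with $\mls L\in\widetilde{\Sigma}(K)$, $\mls M\in\widetilde{\Sigma}(K')$, hence with $\mls L\neq\mls M$ throughout; since the $U$'s carry filtrations with graded pieces $\mls O_{A'}[s]$, the computation in the proof of \ref{L:1.3} applies verbatim and each summand vanishes because $\mls L^{-1}\otimes\mls M$ is a nontrivial line bundle algebraically equivalent to $0$. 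The one step that is not purely formal is deducing $\text{\rm Ext}^*_A(K,K')=0$ from $\text{\rm Ext}^*_{A'}(\pi^*K,\pi^*K')=0$, as $\pi^*$ is not injective on $\text{\rm Ext}$-groups in general; here I would invoke flat descent along the finite faithfully flat map $\pi$, which presents $R\text{\rm Hom}_A(K,K')$ as the totalization of the cosimplicial object $R\text{\rm Hom}_{A'^{\times_A(\bullet+1)}}(-,-)$, each term of which is (non-canonically) $R\text{\rm Hom}_{A'}(\pi^*K,\pi^*K')$ tensored over $k$ with the cohomology of a power of $\text{\rm Ker}(\pi)$, hence zero. (Equivalently, $A=[A'/\text{\rm Ker}(\pi)]$ and $R\text{\rm Hom}_A(K,K')$ is the derived $\text{\rm Ker}(\pi)$-invariants of $R\text{\rm Hom}_{A'}(\pi^*K,\pi^*K')$.)

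For (ii): First I would reduce to the case $\Sigma(K)=\Sigma(K')=\{\sigma\}$, using that $K=\bigoplus_{\tau\in\Sigma(K)}K_\tau$ (and similarly for $K'$) as in the remark preceding the lemma, so that $\text{\rm Hom}(K,K')\supseteq\text{\rm Hom}(K_\sigma,K'_\sigma)$ for $\sigma\in\Sigma(K)\cap\Sigma(K')$. In this reduced situation $\widetilde{\Sigma}(K)=\widetilde{\Sigma}(K')$ is a single coset $C_\sigma$ of $\alpha'(\text{\rm Ker}(\pi))(k)$ in $A^{\prime t}(k)$, and by construction $\pi^*K\simeq\bigoplus_{\mls L\in C_\sigma}U_{\mls L}\otimes\mls L\otimes\mls N'$ and $\pi^*K'\simeq\bigoplus_{\mls L\in C_\sigma}U'_{\mls L}\otimes\mls L\otimes\mls N'$, where all $U_{\mls L}$ and $U'_{\mls L}$ are \emph{nonzero} unipotent vector bundles on $A'$ carrying the natural $\text{\rm Ker}(\pi)$-linearization. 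Next, exactly as in \ref{L:1.3}, the off-diagonal terms vanish (a map from a unipotent bundle to a unipotent bundle twisted by a nontrivial degree-zero line bundle is zero), so $\text{\rm Hom}_{A'}(\pi^*K,\pi^*K')\simeq\bigoplus_{\mls L\in C_\sigma}\text{\rm Hom}_{A'}(U_{\mls L},U'_{\mls L})$, and every summand is nonzero: a nonzero unipotent bundle has $\mls O_{A'}$ both as a sub-bundle and as a quotient, so $U_{\mls L}\twoheadrightarrow\mls O_{A'}\hookrightarrow U'_{\mls L}$ is a nonzero map. Finally, since $\alpha'$ is injective on $\text{\rm Ker}(\pi)$ by \ref{L:3.5}, the group $\text{\rm Ker}(\pi)(k)$ acts simply transitively on $C_\sigma$ via $\mls L\mapsto\mls L\otimes\mls L_{\alpha'(b)}$, and the $\text{\rm Ker}(\pi)$-linearization permutes the summands $\text{\rm Hom}_{A'}(U_{\mls L},U'_{\mls L})$ compatibly; this exhibits $\text{\rm Hom}_{A'}(\pi^*K,\pi^*K')$ as a $\text{\rm Ker}(\pi)(k)$-representation induced from the trivial stabilizer of a point of $C_\sigma$, whose space of invariants is therefore isomorphic to a single summand $\text{\rm Hom}_{A'}(U_{\mls L_0},U'_{\mls L_0})\neq 0$. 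As $\text{\rm Hom}_A(K,K')=\text{\rm Hom}_{A'}(\pi^*K,\pi^*K')^{\text{\rm Ker}(\pi)}$ by descent, this gives $\text{\rm Hom}_A(K,K')\neq 0$.

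The step I expect to be the main obstacle is this $A$-versus-$A'$ interface: in (i), the descent argument promoting a vanishing on $A'$ to one on $A$; in (ii), producing an honest homomorphism on $A$ — that is, a $\text{\rm Ker}(\pi)$-invariant one — rather than merely on $A'$, for which the clean device is the identification with an induced representation. That identification, together with the equality of $\text{\rm Ker}(\pi)$-invariants with $\text{\rm Ker}(\pi)(k)$-invariants used at the end of (ii), is transparent when $\text{\rm Ker}(\pi)$ is \'etale, in particular in characteristic $0$. If $\text{\rm Ker}(\pi)$ has an infinitesimal part, one must additionally check that this part acts on $\bigoplus_{\mls L\in C_\sigma}\text{\rm Hom}_{A'}(U_{\mls L},U'_{\mls L})$ with nonzero invariants, which I would handle by factoring $\pi$ into its infinitesimal and \'etale parts and rerunning the same bookkeeping.
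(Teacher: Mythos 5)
Your argument is correct, but it handles the passage from $A'$ back to $A$ by a genuinely different mechanism than the paper. The paper first uses \ref{T:2.7.1} to present $K$ and $K'$ themselves as pushforwards $\pi_*(\oplus_\sigma U_\sigma\otimes\mls L_\sigma\otimes\mls N')$, and then a single application of adjunction and the projection formula computes the groups downstairs directly: $\mathrm{Ext}^*_A(K,K')\simeq\oplus_{b\in\mathrm{Ker}(\pi)}\oplus_{(\sigma,\sigma')}\mathrm{Ext}^*_{A'}(U_\sigma\otimes\mls L_\sigma\otimes\mls L_{\alpha'(b)},U_{\sigma'}\otimes\mls L_{\sigma'})$. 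Part (i) is then the orthogonality of distinct degree-zero line bundles (as in \ref{L:1.3}), and part (ii) is immediate because the hypothesis produces a summand which is a $\mathrm{Hom}$ between nonzero unipotent bundles. You instead stay upstairs, computing $\mathrm{Ext}^*_{A'}(\pi^*K,\pi^*K')$ via \ref{L:2.5}(ii) and descending: for (i) this is harmless, since the descent identification expresses $R\mathrm{Hom}_A(K,K')$ through objects that all vanish, and your reduction to $\widetilde\Sigma(K)\cap\widetilde\Sigma(K')=\emptyset$ and to \ref{L:1.3} is the same core computation; but for (ii) your route obliges you to show the space of $\mathrm{Ker}(\pi)$-invariants is nonzero, which is exactly the equivariance bookkeeping the paper's adjunction avoids and which you yourself flagged as the main obstacle. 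Your induced-representation argument does close this correctly: injectivity of $\alpha'$ on $\mathrm{Ker}(\pi)$ (\ref{L:3.5}) gives the simply transitive permutation of the diagonal summands, and the orthogonality of \ref{L:1.3} shows the linearization respects the isotypic pieces up to the shift by $\alpha'(b)$ (the same check as in the lemma establishing that $\widetilde\Sigma(K)$ is a union of $\mathrm{Ker}(\pi)$-orbits); more simply, one can average a nonzero element of one diagonal summand over $\mathrm{Ker}(\pi)$ and observe the result has a nonzero component, with no division needed. Both your version and the paper's (via \ref{T:2.7.1}) implicitly assume $\mathrm{Ker}(\pi)$ \'etale, which suffices for the intended application in characteristic $0$, so your closing caveat about an infinitesimal part is a reasonable flag but not an issue for the lemma as used.
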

\begin{proof}
For (i) fix a representative in $A^{\prime t}(k)$ for each element of $\Sigma (K)$ and $\Sigma (K')$.  By \ref{T:2.7.1} we can then write 
$$
K = \pi _*(\oplus _{\sigma \in \Sigma (K)}U_\sigma \otimes {\mls L}_\sigma \otimes \mls N'), \ \ K' = \pi _*(\oplus _{\sigma '\in \Sigma (K')}U_{\sigma '}\otimes \mls L_{\sigma '}\otimes \mls N')
$$
with the $U_\sigma $ and $U_{\sigma '}$ as in \ref{L:2.5} (ii). By adjunction we then have
\begin{eqnarray*}
\text{\rm Ext}^*_A(K, K')& \simeq & \oplus _{b\in \text{Ker}(\pi )}\oplus _{(\sigma , \sigma ')\in \Sigma (K)\times \Sigma (K')}\text{\rm Ext}^*_{A'}(U_\sigma \otimes \mls L_\sigma \otimes \mls L_{\alpha '(b)}\otimes \mls N', U_{\sigma '}\otimes \mls L_{\sigma '}\otimes \mls N')\\
& \simeq & \oplus _{b\in \text{Ker}(\pi )}\oplus _{(\sigma , \sigma ')\in \Sigma (K)\times \Sigma (K')}\text{\rm Ext}^*_{A'}(U_\sigma \otimes \mls L_\sigma \otimes \mls L_{\alpha '(b)}, U_{\sigma '}\otimes \mls L_{\sigma '}).
\end{eqnarray*}
From this and using the filtrations on the $U_\sigma $'s we see that it suffices to show that if $\mls L$ and $\mls L'$ are line bundles on $A'$ corresponding to distinct points of $A^{\prime t}(k)$ then
$$
\text{\rm Ext}^*(\mls L, \mls L') = 0,
$$
which is standard (see for example \cite[7.19]{AV}).

Similarly statement (ii) reduces to showing that if $\mls E$ and $\mls E'$ are unipotent vector bundles on $A'$ then $\text{Hom}(\mls E, \mls E')\neq 0$, which is immediate.
\end{proof}

\begin{pg} Consider a closed immersion $i:A\hookrightarrow B$ of abelian varieties and let $K$ be a semi-homogeneous complex on $A$ with respect to \eqref{E:setup}.  Since $i$ is a homomorphism we have for every $a\in A(k)$ an isomorphism
$$
t_a^*i^*i_*\simeq i^*i_*t_a^*.
$$
From this it follows that $i^*i_*K$ is again a semi-homogeneous complex with respect to \eqref{E:setup}.
\end{pg}

\begin{lem}\label{L:4.5}  Let $K$ be a semi-homogeneous vector bundle on $A$. 
 Then $\Sigma (i^*i_*K) = \Sigma (K).$
\end{lem}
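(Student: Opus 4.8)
The plan is to show that $i^{*}i_{*}K$ is, as an object of $D(A)$, a finite direct sum of shifts of copies of $K$; since the set $\Sigma$ is visibly unchanged by shifts and is additive over finite direct sums, this at once yields $\Sigma(i^{*}i_{*}K)=\Sigma(K)$. (Here $i^{*}$ denotes derived pullback; $i_{*}$ is exact.)

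The main step is to pin down the derived self-intersection $i^{*}i_{*}\mls O_A\in D(A)$, and the following device makes this painless. Put $C:=B/A$ with quotient homomorphism $p:B\to C$; then $p$ is flat with scheme-theoretic fibre $A$ over the origin, so, writing $\epsilon:\Sp\,k\hookrightarrow C$ for the inclusion of the origin, we have $i_{*}\mls O_A\simeq p^{*}\epsilon_{*}k$ in $D(B)$. Since $p\circ i:A\to C$ is the constant morphism to the origin it factors as $A\xrightarrow{a}\Sp\,k\xrightarrow{\epsilon}C$, and therefore
$$
i^{*}i_{*}\mls O_A\;\simeq\;L(p\circ i)^{*}\epsilon_{*}k\;\simeq\;La^{*}\bigl(L\epsilon^{*}\epsilon_{*}k\bigr).
$$
Now $L\epsilon^{*}\epsilon_{*}k$ lies in the derived category of $k$-vector spaces, hence is formal; its $p$-th cohomology is $\operatorname{Tor}^{\mls O_{C,0}}_{p}(k,k)$, which has dimension $\binom{c}{p}$ for $0\le p\le c$ and is zero otherwise, since $\mls O_{C,0}$ is a regular local ring of dimension $c:=\dim C=\dim B-\dim A$. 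Thus $L\epsilon^{*}\epsilon_{*}k\simeq\bigoplus_{p=0}^{c}k^{\oplus\binom{c}{p}}[p]$, and applying the exact functor $La^{*}=a^{*}$ gives $i^{*}i_{*}\mls O_A\simeq\bigoplus_{p=0}^{c}\mls O_A^{\oplus\binom{c}{p}}[p]$. As $i$ is a regular closed immersion, the functor $i^{*}i_{*}$ is isomorphic to $(-)\otimes^{\mathbb L}_{\mls O_A}(i^{*}i_{*}\mls O_A)$ --- which is clear locally on $B$, where one resolves $\mls O_A$ by a Koszul complex and observes that, after tensoring with $i_{*}(-)$, the Koszul differential vanishes --- and hence, $K$ being locally free,
$$
i^{*}i_{*}K\;\simeq\;\bigoplus_{p=0}^{c}K^{\oplus\binom{c}{p}}[p].
$$

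It remains to record the behaviour of $\Sigma$. By \ref{P:3.10}, $\Sigma(M)$ is the image in $G(k)$ of $\widetilde{\Sigma}(M)=\{\,\mls L\in A^{\prime t}(k):U_{\mls L}\neq 0\,\}$, where the $U_{\mls L}$ are the isotypic pieces of $\pi^{*}M$ in the decomposition \eqref{E:2.5.2}. Pullback along $\pi$, the twist by $\mls N^{\prime -1}$, and the homogeneous decomposition \eqref{E:decomposition} all commute with shifts and with finite direct sums, and the isotypic pieces of a shift (resp.\ a direct sum) are the shifts (resp.\ the direct sums) of the original ones; hence $\widetilde{\Sigma}(M[n])=\widetilde{\Sigma}(M)$, $\widetilde{\Sigma}(M^{\oplus n})=\widetilde{\Sigma}(M)$ for $n\geq 1$, and $\widetilde{\Sigma}(M_{1}\oplus M_{2})=\widetilde{\Sigma}(M_{1})\cup\widetilde{\Sigma}(M_{2})$, and likewise for $\Sigma$. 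Applying this to the decomposition of $i^{*}i_{*}K$ above --- each summand being a positive number of copies of a shift of $K$, since every $\binom{c}{p}\geq 1$ --- gives $\Sigma(i^{*}i_{*}K)=\Sigma(K)$.

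I expect the only genuine content to be the identification of $i^{*}i_{*}\mls O_A$; the device that makes it clean, and uniform in the characteristic (dodging any Hochschild--Kostant--Rosenberg-type obstruction to a global splitting), is to realise $i:A\hookrightarrow B$ as a fibre of the smooth quotient $B\to B/A$, which pushes the computation down to the derived self-intersection of a point, i.e.\ into the derived category of $k$-vector spaces, where formality is automatic. The passage to $i^{*}i_{*}K$ and the behaviour of $\Sigma$ under shifts and direct sums are then routine.
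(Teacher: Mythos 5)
Your overall strategy is the same as the paper's: both arguments identify $i_*\mls O_A$ with the pullback along the quotient $B\to B/A$ of the skyscraper at the origin, so that the derived self-intersection is controlled by $\operatorname{Tor}^{\mls O_{B/A,0}}_{\bullet}(k,k)$, an exterior algebra on $c=\dim B-\dim A$ generators, and both conclude that $i^*i_*K$ is built out of copies of $K$. In particular your computation of $i^*i_*\mls O_A\simeq \bigoplus_{p}\mls O_A^{\oplus \binom{c}{p}}[p]$ via $La^*(L\epsilon^*\epsilon_*k)$ is correct and is exactly the paper's device.

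The step you have not justified is the assertion that the functor $i^*i_*$ is isomorphic to $(-)\otimes^{\mathbf L}_{\mls O_A}(i^*i_*\mls O_A)$, ``clear locally on $B$''. A global (let alone functorial) isomorphism cannot be verified locally until a global comparison map has been produced, and the local Koszul computation produces none: it only shows that both sides have the same cohomology sheaves locally. Whether $i^*i_*F$ splits as $F\otimes i^*i_*\mls O$ for a regular immersion is in general a genuinely delicate question (already for $F=\mls O$ this is the content of splitting theorems for derived self-intersections), so as written this is a gap --- although in the present situation the splitting is in fact true, e.g.\ by pulling back along $B\to B/A$ a finite locally free resolution of the skyscraper and noting that the resulting complex computing $i_*K\otimes^{\mathbf L}_{\mls O_B}i_*\mls O_A$ has $\mls O_A$-linear differentials. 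More to the point, the splitting is not needed: what your computation does give canonically is the identification of cohomology sheaves $\mls H^{-p}(i^*i_*K)\simeq K\otimes \wedge^pN^\vee_{A/B}\simeq K^{\oplus\binom{c}{p}}$ (the conormal bundle being trivial), i.e.\ the paper's $\mls H^s(i^*i_*K)\simeq W_s\otimes_k K$. Since $\pi$ is flat, the decomposition \eqref{E:2.5.2} is compatible with passing to cohomology sheaves, so for any semi-homogeneous complex $M$ one has $\Sigma(M)=\bigcup_s\Sigma(\mls H^s(M))$; applying this to $i^*i_*K$ gives $\Sigma(i^*i_*K)=\Sigma(K)$, which is exactly how the paper concludes (via the projection formula $i_*i^*i_*K\simeq i_*\mls O_A\otimes^{\mathbf L}_{\mls O_B}i_*K$ rather than a splitting of the complex). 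So: same approach, with one over-claimed intermediate statement that should either be proved properly or, more simply, replaced by the cohomology-sheaf argument.
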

\begin{proof}
Note that by the projection formula we have
$$
i_*i^*i_*K\simeq (i_*\mls O_A)\otimes ^{\mathbf{L}}_{\mls O_B}i_*K\simeq i_*(i^*i_*\mls O_A)\otimes ^{\mathbf{L}}_{i_*\mls O_A}i_*K\simeq i_*((i^*i_*\mls O_A)\otimes K).
$$
Let $\overline B$ denote the quotient $B/A$,  let $g:B\rightarrow \overline B$ be the quotient map, and let $s:\Sp (k)\hookrightarrow \overline B$ be the zero section.  Then
$$
i_*\mls O_A\simeq g^*s_*\mls O_{\Sp (k)},
$$
and we find that
$$
i_*\mls H^s(i^*i_*K)\simeq H^s(k\otimes _{\mls O_{\overline B, s}}k)\otimes _ki_*K.
$$
It follows that we have
$$
\mls H^s(i^*i_*K)\simeq W_s\otimes _kK
$$
for  vector spaces $W_s$, not all of which are zero.
In particular, 
$$
\Sigma (i^*i_*K) = \cup _s\Sigma (\mls H^s(i^*i_*K)) = \Sigma (K).
$$
\end{proof}

\begin{lem}\label{L:4.6} Let $F\in D(B)$ be a complex on $B$ such that for every $s\in \mathbf{S}(k)$ we have
$$
t_{u(s)}^*F\simeq F\otimes \mls R_{s}
$$
in $D(B)$, for a line bundle $\mls R_s$ on $B$ lifting $\mls L_{v(s)}$, and such that for all $s$ we have $\mls H^s(F) \simeq i_*\mls G_s$ for a vector bundle $\mls G_s$ on $A$ semi-homogeneous with respect to \eqref{E:setup}.  Then
$$
F \simeq \oplus _{\sigma \in \Sigma (F)}F_\sigma ,
$$
where 
$$
\Sigma (F):= \cup _s\Sigma (\mls G_s)\subset G(k)
$$
and $F_\sigma $ has the property that $\mls H^s(F_\sigma ) = i_*\mls G_{s, \sigma }.$
\end{lem}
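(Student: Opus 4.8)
The plan is to obtain the decomposition by dévissage on the cohomology sheaves of $F$, the key input being the orthogonality of \ref{L:4.3}(i). First recall (see the Remark preceding \ref{L:4.3}) that any semi-homogeneous vector bundle $\mls G$ on $A$ splits canonically as $\mls G\simeq\bigoplus_{\sigma\in\Sigma(\mls G)}\mls G_\sigma$ with each $\mls G_\sigma$ semi-homogeneous and $\Sigma(\mls G_\sigma)=\{\sigma\}$: the bundle $\pi^*\mls G$ carries the $\widetilde\Sigma(\mls G)$-grading of \ref{L:2.5}(ii), its coarsening by $\Sigma(\mls G)$ is $\mathrm{Ker}(\pi)$-equivariant since $\alpha'$ is injective on $\mathrm{Ker}(\pi)$ by \ref{L:3.5}, and one descends along $\pi$. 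Applying this to each $\mls G_s$ gives $\mls H^s(F)\simeq\bigoplus_\sigma i_*\mls G_{s,\sigma}$, which identifies the sheaves $\mls G_{s,\sigma}$ appearing in the statement.

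The second step is an orthogonality statement on $B$: if $F',F''\in D(B)$ have all cohomology sheaves of the form $i_*(\text{a semi-homogeneous bundle})$ and $\bigcup_s\Sigma(\mls H^s(F'))$ is disjoint from $\bigcup_s\Sigma(\mls H^s(F''))$, then $\text{\rm Ext}^*_B(F',F'')=0$. Via the spectral sequence computing $\text{\rm Ext}^*_B(F',F'')$ from the $\text{\rm Ext}$ groups between cohomology sheaves, this reduces to showing $\text{\rm Ext}^*_B(i_*\mls G,i_*\mls G')=0$ for semi-homogeneous $\mls G,\mls G'$ on $A$ with $\Sigma(\mls G)\cap\Sigma(\mls G')=\emptyset$. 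By adjunction $\text{\rm Ext}^*_B(i_*\mls G,i_*\mls G')\simeq\text{\rm Ext}^*_A(Li^*i_*\mls G,\mls G')$, and by the computation in the proof of \ref{L:4.5} every cohomology sheaf of $Li^*i_*\mls G$ is a finite direct sum of copies of $\mls G$; so a second application of the spectral sequence reduces the vanishing to $\text{\rm Ext}^*_A(\mls G,\mls G')=0$, which is \ref{L:4.3}(i).

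The third step is an induction on the number $n$ of nonzero cohomology sheaves of $F$. For $n\le 1$ the assertion is the first paragraph. For $n\ge2$, let $a$ be least with $\mls H^a(F)\ne0$ and consider the truncation triangle
$$
i_*\mls G_a[-a]\To F\To \tau_{\ge a+1}F\To i_*\mls G_a[-a+1].
$$
Write $F':=\tau_{\ge a+1}F$. Since truncation commutes with the exact autoequivalence $t_{u(s)}^*$ and with $-\otimes\mls R_s$, the complex $F'$ again satisfies the hypotheses of the lemma and has $n-1$ nonzero cohomology sheaves, so by induction $F'\simeq\bigoplus_\sigma F'_\sigma$ with $\mls H^s(F'_\sigma)=i_*\mls G_{s,\sigma}$; in particular $\bigcup_s\Sigma(\mls H^s(F'_\sigma))\subseteq\{\sigma\}$. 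Decompose the connecting map $F'\To i_*\mls G_a[-a+1]$ along the splittings $F'=\bigoplus_\sigma F'_\sigma$ and $i_*\mls G_a[-a+1]=\bigoplus_{\sigma'}i_*\mls G_{a,\sigma'}[-a+1]$; each of its components $F'_\sigma\To i_*\mls G_{a,\sigma'}[-a+1]$ with $\sigma\ne\sigma'$ vanishes by the second step. Hence the triangle is the direct sum over $\sigma$ of triangles $i_*\mls G_{a,\sigma}[-a]\To F_\sigma\To F'_\sigma\To i_*\mls G_{a,\sigma}[-a+1]$, which yields $F\simeq\bigoplus_\sigma F_\sigma$; the long exact cohomology sequence of each summand then gives $\mls H^s(F_\sigma)=i_*\mls G_{s,\sigma}$.

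I expect the main obstacle to be the second step, namely transporting the orthogonality of \ref{L:4.3}(i) from $A$ to $B$; what makes it go through is that $i^*i_*$ is, on cohomology sheaves, a finite sum of copies of the identity functor (the conormal bundle of an abelian subvariety is trivial), as recorded in \ref{L:4.5}. Granting that, the remainder is routine homological bookkeeping with truncation triangles and the two hyper-$\text{\rm Ext}$ spectral sequences.
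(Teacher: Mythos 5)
Your proposal is correct and follows essentially the same route as the paper's proof: induction over the canonical truncations of $F$, splitting the connecting map in the truncation triangle by showing $\text{\rm Ext}^*$-vanishing between distinct $\sigma$-components, which is obtained by adjunction from \ref{L:4.3}(i) together with the computation of $i^*i_*$ in \ref{L:4.5}. The only differences are cosmetic (inducting from the bottom cohomology sheaf rather than by descending truncation, spectral sequences in place of canonical filtrations, and spelling out the decomposition $\mls G_s=\oplus_\sigma \mls G_{s,\sigma}$ from the remark preceding \ref{L:4.3}).
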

\begin{proof}
Set $F_{\geq s}:= \tau _{\geq s}F$.  We prove by descending induction on $s$ that the lemma holds for $F_{\geq s}$.    The base case is trivial since $F_{\geq s}= 0$ for $s$ sufficiently large.  For the inductive step we assume the result holds for $s$ and prove it for $s-1$.  Considering the distinguished triangle
$$
\xymatrix{
i_*\mls G_{s-1}[-s+1]\ar[r]& F_{\geq s-1}\ar[r]& F_{\geq s}\ar[r]\ar[d]_-{\simeq }& i_*\mls G_{s-1}[-s+2]\ar[d]_-{\simeq }\\
&& \oplus _\sigma F_{\geq s, \sigma }& \oplus _\sigma i_*\mls G_{s-1, \sigma }[-s+2]}
$$
we see that to prove the inductive step it suffices to show that for $\sigma \neq \sigma '$ we have
$$
\text{Ext}^*(F_{\geq s, \sigma }, i_*\mls G_{s-1, \sigma '}) = 0.
$$
Considering the canonical filtration on $F_{\geq s, \sigma }$ we see that for this in turn it suffices to show that for $s\neq t$  
$$
\text{Ext}^*_B(i_*\mls G_{t, \sigma }, i_*\mls G_{s, \sigma '})\simeq \text{Ext}^*_A(i^*i_*\mls G_{t, \sigma }, \mls G_{s, \sigma '}) = 0.
$$
This follows from \ref{L:4.3} (i) and \ref{L:4.5}. 
\end{proof}

\begin{prop}\label{P:4.7} 
Let $F\in D(B)$ be a complex satisfying the following:
\begin{enumerate}
    \item [(i)] For every $a\in A(k)$ with lift $s\in \mathbf{S}(k)$ we have $t_{i(a)}^*F\simeq F\otimes \mls R_s$ for a line bundle $\mls R_s$ on $B$ restricting to the line bundle $\mls L_{v(s)}$ in \ref{D:2.1}.
    \item [(ii)] The complex $F$ is set-theoretically supported on $A$.
    \item [(iii)] $\text{\rm End}(F) = k.$
    \item [(iv)] $\text{\rm Ext}^i(F, F) = 0$ for $i<0$.
\end{enumerate}
Then there exists a line bundle $\mls L$ defining the map $\alpha '$ such that $F\simeq i_*\pi _*\mls L[s]$ for an integer $s$.
\end{prop}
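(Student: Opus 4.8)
The plan is to show that, under hypotheses (ii)--(iv), the complex $F$ must already be pushed forward from $A$, and then to read off the conclusion from \ref{T:2.7.1} and the structure theory of homogeneous complexes developed in Section \ref{S:section2}. So the argument will have three steps: (1) produce $F_0\in D^b_{\mathrm{coh}}(A)$ with $F\simeq i_*F_0$; (2) check $F_0$ is a simple semi-homogeneous complex with respect to \eqref{E:setup}; (3) apply \ref{T:2.7.1} and Section \ref{S:section2} to identify $F_0$. I expect the entire difficulty to be concentrated in step (1); the rest is bookkeeping.

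For step (1), write $g\colon B\to\bar B:=B/A$ for the quotient and $0\in\bar B$ for the origin, so $A=g^{-1}(0)$ and, by (ii), $F$ is set-theoretically supported on $g^{-1}(0)$. Because of this support condition the local ring $\mls O_{\bar B,0}$ acts on $F$ through $g^{*}$ by $\mls O_B$-linear --- hence central --- endomorphisms, giving a ring homomorphism $\mls O_{\bar B,0}\to\End_{D(B)}(F)=k$ by (iii); a ring homomorphism from a local ring to its residue field kills the maximal ideal. Choosing a regular system of parameters $f_1,\dots,f_r$ of $\mls O_{\bar B,0}$, the pullbacks $g^{*}f_1,\dots,g^{*}f_r$ form a regular sequence cutting out $A$ in a neighborhood of $A$, and each acts as zero on $F$ in $D(B)$. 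Tensoring $F$ against the Koszul resolution of $i_*\mls O_A$ and using this vanishing degree by degree then yields
$$
i_*Li^{*}F \;=\; F\otimes^{\mathbf L}_{\mls O_B}i_*\mls O_A \;\simeq\;\bigoplus_{j=0}^{r}F[j]^{\oplus\binom{r}{j}},
$$
with $F$ the $j=0$ summand; here $Li^{*}F\in D^b_{\mathrm{coh}}(A)$ since $B$ is smooth and $F$ is therefore perfect. Thus $F$ is a direct summand of $i_*(Li^{*}F)$; since $i_*$ is fully faithful and $D^b_{\mathrm{coh}}(A)$ is idempotent complete, the corresponding idempotent of $\End_{D(A)}(Li^{*}F)$ splits off an object $F_0$ with $F\simeq i_*F_0$. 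This is the step I expect to be the crux: extracting from (iii) the statement that $\mathfrak m_{\bar B,0}$ acts trivially on $F$, and then that $F$ itself (not just its cohomology sheaves) is pushed forward.

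For step (2): since $i$ is a homomorphism, the projection formula and full faithfulness of $i_*$ turn (i) into $t^{*}_{u(s)}F_0\simeq F_0\otimes\mls L_{v(s)}$, so $F_0$ is semi-homogeneous with respect to \eqref{E:setup}. The normal bundle of $A$ in $B$ is trivial, $N_{A/B}\simeq W^{\vee}\otimes\mls O_A$ with $W=\mathfrak m_{\bar B,0}/\mathfrak m_{\bar B,0}^{2}$, so $Li^{*}i_*F_0\simeq\bigoplus_{j=0}^{r}\textstyle\bigwedge^{j}W^{\vee}\otimes F_0[j]$ and hence
$$
\Ext^{\,n}_B(F,F)=\Ext^{\,n}_B(i_*F_0,i_*F_0)\;\simeq\;\bigoplus_{j=0}^{r}\textstyle\bigwedge^{j}W\otimes \Ext^{\,n-j}_A(F_0,F_0).
$$
For $n<0$ every summand involves $\Ext^{<0}_A(F_0,F_0)$, so (iv) forces $\Ext^{<0}_A(F_0,F_0)=0$; feeding this back in for $n=0$ leaves only the $j=0$ term, so $\End_A(F_0)=\End_B(F)=k$ by (iii).

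For step (3): by \ref{T:2.7.1}, $F_0\simeq\pi_*(H\otimes\mls N')$ for a homogeneous complex $H$ on $A'$. Using that $\Ker(\pi)$ is \'etale, write $\Ext^{*}_A(\pi_*(H\otimes\mls N'),\pi_*(H\otimes\mls N'))\simeq\bigoplus_{b\in\Ker(\pi)}\Ext^{*}_{A'}(t_b^{*}(H\otimes\mls N'),H\otimes\mls N')$ and extract the $b=0$ summand; the conclusions of step (2) give $\End_{A'}(H)=k$ and $\Ext^{<0}_{A'}(H,H)=0$. By the lemma of Section \ref{S:section2} characterizing homogeneous complexes with these two properties, $H$ is a line bundle up to a shift, and homogeneity forces $H\simeq\mls L_0[s]$ for some $\mls L_0\in A^{\prime t}(k)$ and an integer $s$. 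Setting $\mls L:=\mls L_0\otimes\mls N'$ we get $\lambda_{\mls L}=\lambda_{\mls L_0}+\lambda_{\mls N'}=\alpha'$ by \ref{P:3.2}(ii), so $\mls L$ defines $\alpha'$, and $F\simeq i_*F_0\simeq i_*\pi_*(\mls L_0\otimes\mls N')[s]=i_*\pi_*\mls L[s]$, as desired.
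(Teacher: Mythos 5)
Your opening moves are sound: the observation that $\mathfrak m_{\bar B,0}$ acts on $F$ through $\End_{D(B)}(F)=k$ and hence annihilates it, and the Koszul computation giving $i_*Li^*F\simeq\bigoplus_{j}F[j]^{\oplus\binom rj}$ (after localizing to an affine neighborhood of $0\in\bar B$, harmless since $F$ is supported on $A$), are correct and in fact reproduce the paper's first step, which extracts from this only that each $\mls H^s(F)$ is scheme-theoretically supported on $A$. The genuine gap is the last sentence of your step (1). The functor $i_*$ is fully faithful on coherent sheaves but \emph{not} on derived categories: by adjunction $\text{Hom}_{D(B)}(i_*M,i_*N)\simeq \text{Hom}_{D(A)}(Li^*i_*M,N)$, and since the normal bundle of $A$ in $B$ is trivial one has $Li^*i_*M\simeq \bigoplus_{j}\wedge^jW^\vee\otimes M[j]$ (as in \ref{L:4.5}), so $\End_{D(B)}(i_*G)\simeq \bigoplus_{j\geq 0}\wedge^jW\otimes \Ext^{-j}_A(G,G)$. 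For $G=Li^*F$ the negative self-Ext groups do \emph{not} vanish: $\Ext^{-1}_A(G,G)\simeq \bigoplus_j\Ext^{j-1}_B(F,F)^{\oplus\binom rj}$ contains $\End_B(F)^{\oplus r}\neq 0$. Hence the idempotent of $\End_{D(B)}(i_*Li^*F)$ cutting out the summand $F$ need not come from $\End_{D(A)}(Li^*F)$, and there is no ``corresponding idempotent'' to split; being a direct summand of an object in the image of $i_*$ does not formally place you in that image (just as a complex whose cohomology sheaves are all pushed forward from $A$ need not itself be pushed forward). A repair would require showing that the positively filtered part of $\End_{D(B)}(i_*G)$ is a nilpotent two-sided ideal with quotient $\End_{D(A)}(G)$ and conjugating the idempotent into the bottom piece --- a real argument, not bookkeeping, and exactly what is missing. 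The same overreach appears more mildly in step (2), where you invoke ``full faithfulness'' to descend the isomorphism $i_*(t_a^*F_0)\simeq i_*(F_0\otimes\mls L_{v(s)})$; that instance is patchable once negative Ext vanishing is known, but it signals the same misconception.

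By contrast, the paper never descends the complex $F$ itself. It shows only that the cohomology sheaves are $i_*\mls G_s$ with $\mls G_s$ semi-homogeneous vector bundles on $A$, uses \ref{L:4.6} together with (iii) to see that $\Sigma(F)$ is a single point, and then uses \ref{L:4.5} and \ref{L:4.3}(ii) to produce a nonzero map $\mls G_n\to\mls G_m$ from the top to the bottom cohomology sheaf, i.e.\ a nonzero negative Ext of $F$, contradicting (iv) unless $F$ is a shift of a single sheaf; only then is $F$ of the form $i_*(\text{sheaf})$, and \ref{T:2.7.1} plus simplicity finish as in your step (3). So the $\Sigma$-machinery of Section \ref{S:section4} is what rules out multiple cohomological degrees; your route tries to bypass it by descending all of $F$ first, and that is precisely the step that is not justified. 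Your steps (2)--(3), which agree with the paper's endgame, would be fine if step (1) were actually established.
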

\begin{proof}
Note first of all that conditions (i)-(iii) imply that each cohomology sheaf $\mls H^s(F)$ is of the form $i_*\mls G_s$ for a  vector bundle $\mls G_s$ on $A$ which is semi-homogeneous with respect to \eqref{E:setup}.
To see this it suffices to show that each $\mls H^s(F)$ is scheme-theoretically supported on $A$.  Consider the quotient $B/A$, let 
$$
U = \Sp (R) \subset B/A
$$
be an affine neighborhood of $0$, and let $B_U$ denote the preimage of $U$.  Since $F$ is set-theoretically supported on $A$ the sheaf $\mls H^s(F)$ is the pushforward of its restriction to $B_U$.  If $\mathfrak{m}\subset R$ denotes the maximal ideal corresponding to the origin then it suffices to show that $\mathfrak{m}$ annihilates $\mls H^s(F)|_{B_U}$.  This follows from noting that the action of $R$ on $\mls H^s(F)|_{B_U}$ factors through the map
$$
R\rightarrow \text{Ext}^0_{B_U}(F|_{B_U}, F|_{B_U})\simeq \text{Ext}^0(F, F) = k.
$$

Observe also that in light of \ref{L:4.6} and (iii) we have
$$
\Sigma (F) = \{\sigma \}
$$
for a single element $\sigma \in A'(k)/\alpha '(\text{Ker}(\pi ))$.

Next we show that condition (iv) implies that $F$ is, up to a shift, a sheaf.  Suppose to contrary that $F$ is concentrated in more than one degree and let $n$ (resp. $m$) be the top (resp. bottom) nonzero degree so $n>m$.  Then
$$
\text{Ext}^{m-n}(F, F)\simeq \text{Ext}^0(i_*\mls G_n, i_*\mls G_m),
$$
and the natural map
$$
\text{Ext}^0_A(\mls G_n, \mls G_m)\rightarrow \text{Ext}^0(i_*\mls G_n, i_*\mls G_m)
$$
is injective since a map $\mls G_n\rightarrow \mls G_m$ is determined by its pushforward.  Now since $\Sigma (F)$ is a point we must have $\Sigma (\mls G_n) = \Sigma (\mls G_m)$ so by \ref{L:4.3} (ii) there exists a nonzero map $\mls G_n\rightarrow \mls G_m$, a contradiction.

Thus after possibly applying a shift, $F$ is a sheaf and by \ref{T:2.7.1} we have
$$
F \simeq i_*\pi _*(U\otimes \mls L_\sigma \otimes \mls N')
$$
for a unipotent vector bundle $U$.  This vector bundle $U$ must be of rank $1$, for otherwise it has nonscalar endomorphisms contradicting (iii).  We 
conclude that $F$ has the desired form.
\end{proof}

\section{Moduli of weak point complexes}\label{S:section5}

\begin{pg}
Let $X$ be a smooth projective geometrically connected scheme over a field $k$ of dimension $d$, and let $D(X)$ be its bounded derived category of coherent sheaves.

In the literature one finds various possible notions of point objects in $D(X)$ (see \cite[2.1]{BondalOrlov}, \cite[5.1]{ELO}).  To avoid confusion we introduce the following definition which captures the necessary properties for \ref{T:mainthm}.
\end{pg}

\begin{defn} A \emph{weak point object} is a complex $F\in D(X)$ satisfying the following conditions:
\begin{enumerate}
    \item [(i)] $\text{Ext}^i(F, F) = 0$ for $i<0$.
    \item [(ii)] $\text{Ext}^0(F, F) = k$.
    \item [(iii)] $\text{Ext}^1(F, F)$ has dimension $\leq d$.
\end{enumerate}
\end{defn}

\begin{pg}
For a $k$-scheme $S$ we define a \emph{relative weak point object} on $X_S$ to be an $S$-perfect complex $F\in D(X_S)$ such that for every geometric point $\bar s\rightarrow S$ the induced object
$$
F_{\bar s}\in D(X_{\kappa (\bar s)})
$$
is a weak point object.  

Let $\mls P$ be the fibered category over the category of $k$-schemes which to any $S$ associates the groupoid of relative weak point objects in $D(X_S)$.
\end{pg}

\begin{lem}\label{L:5.4} The fibered category $\mls P$ is an algebraic stack locally of finite type over $k$.
\end{lem}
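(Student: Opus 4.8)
The plan is to realise $\mls P$ as a locally closed substack of the stack of universally gluable perfect complexes on $X$, and then to quote the algebraicity of the latter. Recall that a perfect complex $E$ on $X_T$, for a $k$-scheme $T$, is \emph{universally gluable} if $\Ext^i_{X_{T'}}(E_{T'},E_{T'})=0$ for all $i<0$ and all $k$-morphisms $T'\to T$; let $\mathcal D^{\mathrm{ug}}$ denote the fibered category over $k$-schemes sending $T$ to the groupoid of $T$-perfect universally gluable complexes on $X_T$. By a theorem of Lieblich on the moduli of complexes on a proper morphism \cite{Lieblich} (see also the dg-categorical version of To\"en--Vaqui\'e \cite{ToenVaquie}), $\mathcal D^{\mathrm{ug}}$ is an algebraic stack locally of finite presentation over $k$; since $k$ is a field this is the same as locally of finite type over $k$. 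It therefore suffices to identify $\mls P$ with a locally closed substack of $\mathcal D^{\mathrm{ug}}$.

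Given a $k$-scheme $S$ and an $S$-perfect complex $F$ on $X_S$, I would set $\mathcal E_F:=\mathbf R\pi_{S*}\mathbf R\cHom_{X_S}(F,F)$, where $\pi_S\colon X_S\to S$ is the projection. Because $X/k$ is smooth and proper and $F$ is $S$-perfect, $\mathbf R\cHom_{X_S}(F,F)$ is $S$-perfect, hence $\mathcal E_F$ is a perfect complex on $S$ whose formation commutes with arbitrary base change on $S$; in particular $\mathcal E_F\otimes^{\mathbf L}_{\mls O_S}\kappa(\bar s)\simeq \mathbf R\romanHom_{X_{\bar s}}(F_{\bar s},F_{\bar s})$ for every geometric point $\bar s$ of $S$, so that $H^i(\mathcal E_F\otimes^{\mathbf L}\kappa(\bar s))=\Ext^i(F_{\bar s},F_{\bar s})$. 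If $F$ is a relative weak point object, then condition (i) says that these groups vanish for $i<0$ and all $\bar s$; since $\mathcal E_F$ is perfect, this is exactly the statement that $\mathcal E_F$ has tor-amplitude contained in $[0,\infty)$, so the vanishing persists after any base change $T'\to S$, i.e.\ $F$ is universally gluable. Hence $\mls P$ is a substack of $\mathcal D^{\mathrm{ug}}$, and --- running the same construction over an arbitrary test scheme --- it is exactly the substack cut out by the conditions $\dim_{\kappa(\bar s)}H^0(\mathcal E_F\otimes^{\mathbf L}\kappa(\bar s))=1$ and $\dim_{\kappa(\bar s)}H^1(\mathcal E_F\otimes^{\mathbf L}\kappa(\bar s))\le d$ at all geometric points $\bar s$. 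By upper semicontinuity of the fibrewise dimensions of the cohomology sheaves of a perfect complex, the locus $\dim H^0\le 1$ is open, the locus $\dim H^0\ge 1$ is closed, and the locus $\dim H^1\le d$ is open; thus $\mls P$ is a locally closed substack of $\mathcal D^{\mathrm{ug}}$. A locally closed substack of an algebraic stack locally of finite type over $k$ is again an algebraic stack locally of finite type over $k$, which proves the lemma.

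There is no deep obstacle here: the whole weight of the argument rests on invoking, rather than reproving, the representability of the moduli of complexes, so the point requiring care is matching our moduli problem to the hypotheses of that theorem --- in particular that ``$S$-perfect'' is the correct relative finiteness condition, and that the perfectness and base-change compatibility of $\mathbf R\pi_{S*}\mathbf R\cHom(F,F)$ (which is where smoothness and properness of $X/k$ enter) allow conditions (i)--(iii) to propagate to open, resp.\ locally closed, conditions on the base. I would also note that it is immaterial for the statement whether the zero complex constitutes a separate clopen piece of $\mathcal D^{\mathrm{ug}}$: we only assert that $\mls P$ is a \emph{locally closed} substack, and the condition $\dim\Ext^0(F_{\bar s},F_{\bar s})=1$ is locally closed in any case.
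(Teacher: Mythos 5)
Your strategy is the same as the paper's: embed $\mls P$ into Lieblich's algebraic stack of (universally) gluable relatively perfect complexes and then cut out conditions (ii) and (iii). Your preliminary verification that the fibrewise vanishing in (i) implies universal gluability, via the perfection, base-change compatibility and tor-amplitude of $\mathcal E_F=\mathbf R\pi_{S*}\mathbf R\cHom_{X_S}(F,F)$, is correct and is a point the paper leaves implicit.

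There is, however, a genuine gap in the last step, where you pass from ``the set of points where $\dim H^0(\mathcal E_F\otimes^{\mathbf L}\kappa(\bar s))=1$ and $\dim H^1\le d$ is locally closed'' to ``$\mls P$ is a locally closed substack of $\mathcal D^{\mathrm{ug}}$''. What must be shown is that for every scheme $T$ and every $E\in\mathcal D^{\mathrm{ug}}(T)$ the subfunctor of those $T'\to T$ for which $E_{T'}$ satisfies the fibrewise conditions is representable by a locally closed immersion into $T$. Semicontinuity only controls the underlying set of points. For the open conditions ($\dim\Ext^0\le 1$, $\dim\Ext^1\le d$) this is enough, but the remaining condition $\dim\Ext^0\ge 1$ (equivalently, nonvanishing of the fibres of $E$) is a closed condition, and a morphism $T'\to T$ can land set-theoretically in a closed locus without factoring through it for any fixed scheme structure: already $\Sp k[x]/(x^2)\to \Sp k[x]$ and the locus $x=0$ exhibit the problem, so representability by a locally closed immersion cannot be deduced from semicontinuity of fibre dimensions alone. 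This is precisely where the paper argues differently: condition (iii) is handled by openness, but for condition (ii) the paper invokes the representability result of the Stacks Project (Tag 0BDL), which produces an actual locally closed immersion representing the rank-one condition scheme-theoretically (in effect using the identity section of $H^0(\mathcal E_E)$ together with cohomology-and-base-change/Fitting-ideal arguments), rather than just a locally closed subset. To repair your argument you should either quote such a representability statement or prove it directly: for instance, work on the open locus where $\dim\Ext^0\le 1$ and $\dim\Ext^1\le d$, and show there that the condition that every fibre of $E$ is nonzero and that the identity section trivializes $H^0(\mathcal E_E)$ compatibly with base change is represented by a locally closed subscheme.
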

\begin{proof}
Let $\mls D$ be the fibered category which to any scheme $S$ associates the groupoid of objects $K\in D(X_S)$ such that for all geometric points $\bar s\rightarrow S$ we have $\text{Ext}^i(K_{\bar s}, K_{\bar s}) = 0$ for $i<0$. By \cite[Theorem on p. 176]{Lieblichproper} the fibered category $\mls D$ is an algebraic stack locally of finite type over $k$.  We claim that the natural inclusion
$$
\mls P\hookrightarrow \mls D
$$
is represented by locally closed immersions.   Indeed the substack of $\mls D$ satisfying (iii) is open by semi-continuity and within this substack condition (ii) is represented by locally closed immersions by \cite[\href{https://stacks.math.columbia.edu/tag/0BDL}{Tag 0BDL}]{stacks-project}.
\end{proof}
\begin{rem} Note that $\mls P$ is a $\mathbf{G}_m$-gerbe over an algebraic space, which we will denote by $P$.
\end{rem}

\begin{pg}
 For any object $F\in D(X)$ there is an induced map
$$
\kappa _{F}:HH^1(X)\rightarrow \text{Ext}^1(F, F)
$$
defined as follows, where $HH^1(X)$ denotes the first Hochschild cohomology of $X$.  By definition we have
$$
HH^1(X) = \text{Hom}_{X\times X}(\Delta _{X*}\mls O_X, \Delta _{X*}\mls O_X[1]),
$$
and therefore an element $\alpha \in HH^1(X)$ defines a morphism of functors 
$$
\alpha :\text{id}_{D(X)}\rightarrow \text{id}_{D(X)}[1].
$$
Evaluating on $F$ we get an element
$$
\alpha (F):F\rightarrow F[1],
$$
or equivalently an element of $\text{Ext}^1(F, F)$.  The map $\kappa _{F}$ sends $\alpha $ to $\alpha (F)$.  
\end{pg}
\begin{rem} 
Note that if $x\in X(k)$ is a $k$-point with associated skyscraper sheaf $\mls O_x$ then we have
$$
\text{Ext}^1(\mls O_x, \mls O_x)\simeq T_X(x)
$$
and in characteristic $0$ the map $\kappa _{\mls O_x}$ gets identified, via the HKR isomorphism \cite{Swan}, with a map
$$
H^1(X, \mls O_X)\oplus H^0(X, T_X)\rightarrow T_X(x).
$$  
This map is simply the projection to $H^0(X, T_X)$, followed by the natural map $H^0(X, T_X)\rightarrow T_X(x)$.
\end{rem}

\begin{pg} 
For $F\in \mls P(k)$ the map $\kappa _{F}$ has the following geometric interpretation.  Following \cite[5.4]{LieblichOlsson} let $\mls R_X^0$ denote the fibered category over $k$ whose fiber over a scheme $S$ is the groupoid of objects $Q\in D((X\times X)_S)$ satisfying the "Rouquier condition".  So $\mls R_X^0$ is a $\mathbf{G}_m$-gerbe over 
$$
\mathbf{R}_X^0:= \text{Aut}^0_X\times \text{Pic}^0_X,
$$
and such a complex $Q$ is given by $\Gamma _{\sigma *}\mls L$, where $\sigma $ is an automorphism of $X$ and $\mls L$ is a line bundle on $X$ such that the pair $(\sigma, \mls L)$ defines a point of $\text{Aut}^0_X\times \text{Pic}_X^0.$

The given object $F$ defines a morphism of stacks
\begin{equation}\label{E:2.3.1}
\mls R_X^0\rightarrow \mls D, \ \ (\sigma, \mls L)\mapsto (\sigma ^*F)\otimes \mls L,
\end{equation}
where $\mls D$ is as in the proof of \ref{L:5.4}.
The tangent space at the origin of $\mls R_X^0$ corresponds to isomorphism classes of deformations of $\Delta _*\mls O_X$ in $D(X\times X)$, which is given by 
$$
HH^1(X)= \text{Ext}^1(\Delta _{X*}\mls O_X, \Delta _{X*}\mls O_X),
$$
and the tangent space to $[F]\in \mls D(k)$ is given by 
$$
\text{Ext}^1(F, F).
$$
The map $\kappa _{F}$ is induced by \eqref{E:2.3.1} by passing to the tangent spaces.

Note also that since $\mls R_X^0$ is a $\mathbf{G}_m$-gerbe over $\mathbf{R}_X^0$ we can also view $HH^1(X)$ as the tangent space of $\mathbf{R}_X^0$ at the origin.
\end{pg}

\section{Proof of \ref{T:mainthm}}\label{S:section6}

It suffices to prove the theorem after making a base change to an algebraic closure of $k$, so without loss of generality we may assume that $k$ is algebraically closed.

We first show that if $F$ satisfies conditions (i) and (ii) then $F$ has the form indicated in \ref{T:mainthm}.

\begin{pg}\label{P:6.1} 
Let $\mathbf{S}_X\subset \mathbf{R}_X^0=A\times A^t$ be the stabilizer of the point $[F]\in P$ (recall that $P$ is the coarse space of the $\mathbf{G}_m$-gerbe $\mls P$).  The tangent space of $\mathbf{S}_X$ at the origin is then the kernel of $\kappa _{F}$. Since $\text{Ext}^1(F, F)$ has dimension $\leq d$ by assumption,  it follows that the dimension of $\mathbf{S}_X$ is equal to some integer $g\geq d$ (here we use the fact that $k$ has characteristic $0$).  Let $\mathbf{S}_X^0\subset \mathbf{S}_X$ be the connected component of the identity, and let 
$$
\mathbf{S}_X'\subset A
$$
be the image of $\mathbf{S}_X^0$ under the first projection and set
$$
\mathbf{K}:= \text{Ker}(\mathbf{S}_X^0\rightarrow \mathbf{S}_X').
$$
 So $\mathbf{S}_X'$ is an abelian subvariety of $A$ consisting of those points $a\in A$ for which $t_a^*F\simeq F\otimes \mls L$ for a line bundle $\mls L$ algebraically equivalent to $0$ (the reason for considering the space $P$ is to make this loose statement precise).  Let $g'$ denote the dimension of $\mathbf{S}_X'$.
\end{pg}

\begin{pg}
Let $Z\subset X$ be the set-theoretic support of $F$, viewed as a subscheme with the reduced structure.  Since $Z$ is invariant under $\mathbf{S}_X'$ every irreducible component of $Z$ has dimension $\geq g'$.

Let $Z'\subset Z$ be an irreducible component of dimension $d'$ and let $\widetilde Z\rightarrow Z'$ be a surjective morphism with $\widetilde Z$ smooth and projective over $k$ of dimension $d'$ (such a $\widetilde Z$ exists using the theory of alterations \cite{dejong}).  If $\widetilde {\mathbf{T}}$ denotes the Albanese torsor of $\widetilde Z$ then since $X$ is its own Albanese torsor the map $\widetilde Z\rightarrow X$ extends to a map
$$
\rho :\widetilde {\mathbf{T}}\rightarrow X.
$$
Since $Z'$ has dimension $d'$ it follows that this map has image of dimension $\geq d'$.  Taking duals we find that the image of 
$$
A^t = \text{Pic}^0_X\rightarrow \text{Pic}^0_{\widetilde {\mathbf{T}}}
$$
has dimension $\geq d'$.  

Now observe that the connected component of the identity $\mathbf{K}^0$ of $\mathbf{K}\subset A^t$ is in the kernel of this map.  Indeed let $\widetilde {F}$ be the pullback of $F$ to $\widetilde Z$.  Then for some integer $i$ the sheaf $\mls H^i(\widetilde {F})$ has generic rank $r_i\neq 0$.  Now if $\mls L$ is a line bundle on $X$ corresponding to a point of $\mathbf{K}^0$ then we have 
$$
\widetilde {F}\otimes \mls L|_{\widetilde Z}\simeq \widetilde {F}.
$$
Taking determinants we find that
$$
\text{det}(\mls H^i(\widetilde {F}))\otimes \mls L^{\otimes r_i}|_{\widetilde Z}\simeq \text{det}(\mls H^i(\widetilde {F})),
$$
and therefore $\mls L|_{\widetilde Z}$ is a torsion line bundle.  Since $\mathbf{K}^0$ is connected it follows that $\mls L|_{\widetilde Z}$ is trivial.

We conclude that the dimension of $\mathbf{K}$ is $\leq d-d'$.  Therefore
$$
g = g'+\text{dim}(\mathbf{K})\leq d'+(d-d') =d
$$
with equality if and only $g' = d'$ and $\text{dim}(\mathbf{K}) = d-d'$.  Since $g\geq d$ we conclude that $d' = g'$ and that each irreducible component of $Z$ is  a $\mathbf{S}'_X$-orbit.  Since $F$ is a point sheaf the support $Z$ is connected and we conclude that $Z$ is, in fact, a torsor under $\mathbf{S}_X'$.  
\end{pg}

\begin{pg} 
We now apply the results of the previous sections with \eqref{E:setup} the diagram
$$
\xymatrix{
\mathbf{S}_X^0\ar[d]_-u\ar[r]^-v& \mathbf{S}^{\prime t}\\
\mathbf{S}'.&}
$$
We then conclude from \ref{P:4.7} that $F$ has the desired form. 
\end{pg}

\begin{pg}
For the converse statement in \ref{T:mainthm} proceed as follows.
Let $h$ be the dimension of $Z$ so that $A/H$ has dimension $g-h$.  We have
$$
\text{\rm Ext}^*_X(i_*\mls E, i_*\mls E)\simeq \text{\rm Ext}^*_Z(i^*i_*\mls E, \mls E)\simeq H^*(Z, (i^*i_*\mls E)^\vee \otimes \mls E).
$$
This implies that 
$$
\text{\rm Ext}^0_X(i_*\mls E, i_*\mls E)\simeq H^0(Z, \mls E^\vee \otimes \mls E)\simeq k,
$$
since $\mls E$ is geometrically simple.  To calculate $\text{\rm Ext}^1$, note that by the proof of \ref{L:4.5} we have a distinguished triangle
$$
\mls E^\vee \otimes \mls E\rightarrow \tau _{\leq 1}(i^*i_*\mls E)^\vee \otimes \mls E\rightarrow (\mls E^\vee \otimes \mls E)^{\oplus (g-h)}[-1]\rightarrow \mls E^\vee \otimes \mls E[1].
$$
It follows that we an exact sequence
$$
\text{\rm Ext}^1_Z(\mls E, \mls E)\rightarrow \text{\rm Ext}^1_X(i_*\mls E, i_*\mls E)\rightarrow \text{\rm Ext}^0_Z(\mls E, \mls E)^{\oplus (g-h)}.
$$
Now by \cite[5.8]{MR498572} the dimension of $\text{Ext}^1_Z(\mls E, \mls E)$ is $h$ and since $\mls E$ is simple the dimension of $\text{Ext}^0(\mls E, \mls E)$ is $1$.  We conclude that the dimension of $\text{Ext}^1_X(i_*\mls E, i_*\mls E)$ is less than or equal to $g$ as desired.

This completes the proof of \ref{T:mainthm}. \qed
\end{pg}

\begin{example}\label{E:example6.5}
Let $k$ be an algebraically closed field, let $i:A\hookrightarrow B$ be a closed immersion of abelian varieties over $k$, and let $\mls E$ be a simple semi-homogeneous vector bundle on $A$.  For $F = i_*\mls E$ we can then describe the group scheme $\mathbf{S}_X$ in the above discussion as follows (a consequence of \ref{T:mainthm} is that this example is, in fact, general).  By \cite[5.6 and 5.8]{MR498572} (or use  \ref{T:2.7.1}) there is an isogeny $\pi :A'\rightarrow A$ and a line bundle $\mls N'$ on $A'$ such that $\alpha ':\lambda _{\mls N'}:A'\rightarrow A^{\prime t}$ restricts to an injection on $\text{Ker}(\pi )$ and such that $\mls E\simeq \pi _*\mls N'$.  The abelian variety $\mathbf{S}_X$ can then be described as follows.

The $k$-points of $\mathbf{S}_X$ are pairs $(b, [\mls L])\in B(k)\times B^t(k)$ such that there exists an isomorphism $t_b^*(F)\simeq F\otimes \mls L.$  Such an isomorphism necessarily preserves the scheme-theoretic support of $F$, which implies that $b\in A(k)$ and that we have an isomorphism $\sigma :t_b^*\mls E\simeq \mls E\otimes i^*\mls L$ on $A$.  Now we have 
$$
\mls E\otimes i^*\mls L\simeq (\pi _*\mls N')\otimes i^*\mls L\simeq \pi _*(\mls N'\otimes \mls L|_{A'}).
$$
By adjunction the map $\sigma $ corresponds to a map over $A'$
$$
\pi ^*t_b^*\mls E \simeq \oplus _{b'\in \pi ^{-1}(b)}t_{b'}^*\mls N'\rightarrow \mls N'\otimes \mls L|_{A'}.
$$
From this we conclude that there exists a unique element $b'\in \pi ^{-1}(b)$ such that $[\mls L]_{A'} = \alpha '(b')$ in $A^{\prime t}(k)$.  From this we conclude that there is a cartesian diagram
$$
\xymatrix{
\mathbf{S}_X\ar[r]\ar[d]& A^{\prime }\ar[d]^-{(\text{id}_{A'}, \alpha ')}\\
A'\times B^t\ar[r]& A^{\prime }\times A^{\prime t}.}
$$
with the map to $B\times B^t$ given by the composition
$$
\xymatrix{
\mathbf{S}_X\ar[r]& A'\times B^t\ar[r]^-{i\circ \pi \times \text{id}_{B^t}}& B\times B^t.}
$$
Equivalently, we have a cartesian diagram
$$
\xymatrix{
\mathbf{S}_X\ar[r]\ar[d]& A'\ar[d]^-{\alpha '}\\
B^t\ar[r]& A^{\prime t}.}
$$

From this description it follows that $\mathbf{S}_X$ is connected.  Indeed dualizing the exact sequence
$$
0\rightarrow A\rightarrow B\rightarrow B/A\rightarrow 0
$$
we get an exact sequence (using that $B\rightarrow B/A$ has geometrically connected fibers)
$$
0\rightarrow (B/A)^t\rightarrow B^t\rightarrow A^t\rightarrow 0.
$$
We therefore get an exact sequence
$$
0\rightarrow (B/A)^t\rightarrow \mathbf{S}_X\rightarrow A^t\times _{A^{\prime t}, \alpha '}A'\rightarrow 0.
$$
Now $A^{t}\rightarrow A^{\prime t}$ is the cover corresponding by duality to $\text{Ker}(\pi )\hookrightarrow A'$.  More concretely, if we write $\mls P_{A', a'}$ for the restriction of the Poincar\'e bundle on $A'\times A^{\prime t}$ to $\{a'\}\times A^{\prime t}$ then $A^t$ is described, as a scheme over $A^{\prime t}$, by
$$
\oplus _{a'\in \text{Ker}(\pi )}\mls P_{A', a'},
$$
with algebra structure given by the biextension structure on $\mls P_{A'}$.  It follows from this that the pullback of this cover along $\alpha '$ is the cover of $A'$ given by
$$
\oplus _{a'\in \text{Ker}(\pi )}t_{a'}^*\mls N'\otimes \mls N^{\prime -1}.
$$
That is, $A^t\times _{A^{\prime t}, \alpha '}A'\rightarrow A'$ is the cover given by the composition
$$
\xymatrix{
\text{Ker}(\pi )\ar[r]& A'\ar[r]^-{\alpha '}& A^{\prime t}.}
$$
Since this map is injective by assumption it follows that $A^t\times _{A^{\prime t}, \alpha '}A'$, and therefore also $\mathbf{S}_X$, is connected.  From this it also follows that we have an exact sequence
$$
0\rightarrow A^{\prime t}/\alpha '(\text{Ker}(\pi ))\rightarrow \mathbf{S}_X^t\rightarrow (B/A)\rightarrow 0.
$$
\end{example}

\section{Lagrangians}\label{S:section7}

As noted in the introduction our main result \ref{T:mainthm} is closely related to Polishchuk's work in \cite{Polishchuk2, Polishchuk3}.  In this section we discuss this connection in more detail.

Let $k$ be an algebraically closed field of characteristic $0$.

\begin{pg} Recall that if $\mathbf{S}$ is an abelian variety over $k$ and $\mls N$ is a line bundle on $\mathbf{S}$ then there is an associated biextension $\Lambda (\mls N)$ \cite[VII]{SGA7I} (see also \cite[II.10.3]{Polishchuk}).  This biextension is obtained from the Poincar\'e bundle $\mls P$ on $\mathbf{S}\times \mathbf{S}^t$, endowed with its canonical structure of a biextension, by pullback along the map $\text{id}_{\mathbf{S}}\times \lambda _{\mls N}:\mathbf{S}\times \mathbf{S}\rightarrow \mathbf{S}\times \mathbf{S}^t.$

If $A/k$ is an abelian variety then a \emph{Lagrangian pair} in $A\times A^t$ \cite[2.2.2]{Polishchuk2} consists of the following data:
\begin{enumerate}
    \item [(i)] An abelian subvariety $i:\mathbf{S}\subset A\times A^t$ of the same dimension as $A$.
    \item [(ii)] A line bundle $\mls N$ on $\mathbf{S}$ together with an isomorphism $\Lambda (\mls N)\simeq i^*\mls B_A$ of biextensions, where $\mls B_A$ is the biextension over $(A\times A^t)\times (A\times A^t)$ obtained by pulling back the Poincar\'e bundle along the projection to the first and fourth factor.
\end{enumerate}
By \cite[2.4.5]{Polishchuk2} (applied with $(Y, \alpha ) = (\{0\}\times A^t, \mls O_{\{0\}\times A^t})$) and \cite[2.3.2]{Polishchuk2} a Lagrangian pair $(\mathbf{S}, \mls N)$ gives rise to a coherent sheaf $S_{(\mathbf{S}, \mls N)}$ on $A$ equipped with an isomorphism over $A\times \mathbf{S}$
\begin{equation}\label{E:7.1.1}
m^*S_{(\mathbf{S}, \mls N)}\otimes \text{pr}_{13}^*\mls P_A^{-1}\otimes \text{pr}_2^*\mls N^{-1}\simeq \text{pr}_1^*S_{(\mathbf{S}, \mls N)},
\end{equation}
where $m:A\times \mathbf{S}\rightarrow A$ is the composition of the projection map $A\times \mathbf{S}\rightarrow A\times A$ and the addition map, $\text{pr}_{13}^*\mls P_A$ is the pullback of the Poincar\'e bundle along the first and third projections $A\times \mathbf{S}\rightarrow A\times A^t$, and $\text{pr}_2$ is given by the projecton $A\times \mathbf{S}\rightarrow \mathbf{S}$ followed by $u:\mathbf{S}\rightarrow A$. 
\end{pg}

\begin{rem} The formula \eqref{E:7.1.1} differs from the one in \cite[2.4]{Polishchuk2} since we are taking inverses of the line bundles, so what we denote by $S_{(\mathbf{S}, \mls N)}$ corresponds to the dual of the objects in \cite{Polishchuk2}.  This sign convention will be useful in the proofs below.
\end{rem}

\begin{lem} There exists a subabelian variety $H\subset A$ and an $H$-torsor $i:Z\hookrightarrow A$ such that $S_{(\mathbf{S}, \mls N)}\simeq i_*\mls E$ for a simple semi-homogeneous vector bundle $\mls E$ on $Z$.  In particular, $S_{(\mathbf{S}, \mls N)}$ satisfies conditions (i) and (ii) in \ref{T:mainthm}.
\end{lem}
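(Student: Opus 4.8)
The plan is to exhibit $S := S_{(\mathbf{S},\mls N)}$ as a nonzero semi-homogeneous complex in the sense of Section \ref{S:section3}, supported on a torsor, and then to apply Proposition \ref{P:4.7}. First I would restrict the defining isomorphism \eqref{E:7.1.1} to a slice $A\times\{s\}$ for a point $s\in\mathbf{S}(k)$. Writing $u:\mathbf{S}\to A$ and $v:\mathbf{S}\to A^t$ for the two projections, on $A\times\{s\}$ the morphism $m$ becomes the translation $t_{u(s)}$, the line bundle $\text{pr}_{13}^*\mls P_A$ becomes $\mls L_{v(s)}$, and $\text{pr}_2^*\mls N$ becomes a trivial line bundle (with fiber $\mls N(s)$); hence $t_{u(s)}^*S\simeq S\otimes\mls L_{v(s)}$ for every $s\in\mathbf{S}(k)$. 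Since $\mathbf{S}$ is an abelian variety, $H := u(\mathbf{S})\subset A$ is an abelian subvariety, $u:\mathbf{S}\to H$ is surjective, and (as $H$ is connected and fixes each irreducible component of the reduced support) every irreducible component of the support $Z$ of $S$ is an $H$-coset.

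The crucial point, where the full Lagrangian condition enters rather than merely the slice isomorphisms above, is that $\text{\rm End}(S) = k$; I expect this to be the main obstacle. It can be obtained from Polishchuk's construction \cite[2.3.2, 2.4.5]{Polishchuk2}: up to the duality recorded in the preceding remark, $S$ is the image of $\mls O_{\mathbf{S}}$ under a fully faithful Fourier--Mukai type functor $D(\mathbf{S})\to D(A)$, so that $\text{\rm End}(S)\simeq H^0(\mathbf{S},\mls O_{\mathbf{S}}) = k$; alternatively one argues directly from \eqref{E:7.1.1} using the biextension cocycle it carries. Granting $\text{\rm End}(S) = k$, the sheaf $S$ is indecomposable, so $Z$ is connected and hence a single $H$-coset, i.e. a torsor under $H$ with closed immersion $i:Z\hookrightarrow A$. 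After translating by a point of $Z(k)$ --- which is harmless, since translation by an element of $A$ commutes with the semi-homogeneity structure and preserves endomorphisms --- I may assume $Z = H$ is itself a subabelian variety.

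It then remains to invoke Proposition \ref{P:4.7}, applied with the closed immersion $i:H\hookrightarrow A$, the sheaf $F = S$, and the diagram \eqref{E:setup} given by $u:\mathbf{S}\twoheadrightarrow H$ and $\mathbf{S}\to A^t\to H^t$, together with data \ref{P:3.2} of minimal $\text{\rm deg}(\pi)$ --- such data exists by \ref{L:dataexist} applied to the nonzero semi-homogeneous complex $i^*S$, and $\text{\rm Ker}(\pi)$ is \'etale because $\text{\rm char}(k) = 0$. Condition (i) of \ref{P:4.7} is the first step, (ii) holds by construction, (iii) is $\text{\rm End}(S) = k$, and (iv) is automatic since $S$ is a sheaf; the proposition yields $S\simeq i_*\pi_*\mls L$ for a line bundle $\mls L$ on $A'$ defining $\alpha'$ (with zero shift, $S$ being a sheaf). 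By \ref{L:3.5} the homomorphism $\alpha' = \lambda_{\mls L}$ is injective on $\text{\rm Ker}(\pi)$, so $\mls E := \pi_*\mls L$ is a simple semi-homogeneous vector bundle on $H$ by \cite[5.6 and 5.8]{MR498572}. Undoing the translation gives the asserted form $S\simeq i_*\mls E$, with $i:Z\hookrightarrow A$ a torsor under $H$ and $\mls E$ simple semi-homogeneous. Finally, the last sentence of the lemma --- that $S$ satisfies conditions (i) and (ii) of \ref{T:mainthm} --- is then exactly the converse direction of \ref{T:mainthm}, already established in Section \ref{S:section6}.
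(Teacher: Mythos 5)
Your overall route (slice \eqref{E:7.1.1} to get semi-homogeneity under $\mathbf{S}$, get $\mathrm{End}=k$ from Polishchuk's endosimplicity, then feed everything into \ref{P:4.7}) is close in spirit to what the paper does, but there is a genuine gap at the step ``every irreducible component of the support $Z$ of $S$ is an $H$-coset.'' The justification you give --- that $H=u(\mathbf{S})$ is connected and preserves each irreducible component of the reduced support --- only shows that each component is \emph{$H$-invariant}, i.e.\ a union of $H$-cosets; it does not show that a component has dimension equal to $\dim H$, which is what is needed for it to be a single coset. A priori the support could contain an irreducible $H$-invariant subvariety of dimension strictly larger than $\dim H$ (a positive-dimensional family of cosets), and nothing in your argument rules this out. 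Ruling it out is precisely where the Lagrangian condition $\dim \mathbf{S}=\dim A$ must be used quantitatively: the kernel $\mathbf{K}=\ker(u|_{\mathbf{S}})$ maps injectively to $A^t$ and has dimension $d-\dim H$, the relations $S\simeq S\otimes \mls L$ for $[\mls L]\in v(\mathbf{K})$ force (via an alteration $\widetilde Z\to Z'$ of a component, the Albanese of $\widetilde Z$, and the determinant of a cohomology sheaf of nonzero generic rank) that $v(\mathbf{K})$ dies in $\mathrm{Pic}^0$ of the alteration, giving $\dim \mathbf{K}\le d-\dim Z'$ and hence $\dim Z'\le \dim H$. This is exactly the dimension count of section \ref{S:section6}, and it is not a formality you can replace by connectedness of $H$.

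This is also where your proof diverges from the paper's: the paper does not re-derive semi-homogeneity from \eqref{E:7.1.1} directly, but instead notes that $S_{(\mathbf{S},\mls N)}$ is endosimple, hence defines a point of the moduli space of endosimple objects, identifies its stabilizer in $A\times A^t$ with $\mathbf{S}$ by \cite[2.4.10]{Polishchuk2} (so the stabilizer has dimension $\dim A$), and then reruns the whole of section \ref{S:section6} --- including the alteration/Albanese dimension count and the application of \ref{P:4.7} --- observing that the only role of condition (ii) of \ref{T:mainthm} there was to bound the stabilizer dimension from below. Your slicing argument is a perfectly good substitute for the stabilizer identification (the definition of a Lagrangian pair already gives $\dim\mathbf{S}=\dim A$), and the remainder of your argument (translation to reduce to $Z=H$, existence of data \ref{P:3.2} of minimal degree via \ref{L:dataexist}, \'etaleness of $\ker\pi$ in characteristic $0$, verification of (i)--(iv) of \ref{P:4.7}, and deducing the last sentence from the converse direction of \ref{T:mainthm}) is fine; but you must insert the section \ref{S:section6} dimension argument before you may assert that $Z$ is a torsor under $H$.
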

\begin{proof}
Let $\mls D$ be as in the proof of \ref{L:5.4}.  Again by \cite[\href{https://stacks.math.columbia.edu/tag/0BDL}{Tag 0BDL}]{stacks-project} there is a locally closed substack $\mls D'\subset \mls D$ classifying objects $K\in D(A)$ with $\text{End}(K) = k$,  and $\mls D'$ is a $\mathbf{G}_m$-gerbe over an algebraic space $D'$.  The object $S_{(\mathbf{S}, \mls N)}$ defines an object in $\mls D'(k)$ since it is ``endosimple'' in the terminology of \cite{Polishchuk2}.  We can therefore consider the stabilizer $\mathbf{S}_X\subset A\times A^t$ of the induced point of $D'$, as in \ref{P:6.1}.  By \cite[2.4.10]{Polishchuk2} this stabilizer equals $\mathbf{S}$ and therefore has dimension equal to the dimension of $A$.  Applying the argument of section \ref{S:section6}, noting that condition (ii) is used in the argument to conclude that the dimension of $\mathbf{S}_X$ is $\geq \text{dim}(A)$, we then obtain the lemma.
\end{proof}

\begin{thm}\label{T:7.3} The map of sets
$$
\xymatrix{
\{\text{Lagrangian pairs $(\mathbf{S}, \mls N)$}\}\ar[d]^-{(\mathbf{S}, \mls N)\mapsto S_{(\mathbf{S}, \mls N)}}\\
\{\text{coherent sheaves $\mls F$ satisfying (i) and (ii) in \ref{T:mainthm}}\}}
$$
is a bijection.
\end{thm}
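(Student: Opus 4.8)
The plan is to prove surjectivity and injectivity of the displayed map separately, in both directions routing through the stabilizer subgroup $\mathbf{S}_X\subset A\times A^t$ of the associated point of the moduli space, in the sense of \ref{P:6.1}. That the map is well defined and has image inside the set of sheaves satisfying (i) and (ii) is exactly the content of the lemma preceding \ref{T:7.3}.

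\emph{Surjectivity.} Let $\mls F$ be a coherent sheaf on $A$ satisfying (i) and (ii). By \ref{T:mainthm}, $\mls F\simeq i_*\mls E$ for an $H$-torsor $i:Z\hookrightarrow A$ and a geometrically simple semi-homogeneous bundle $\mls E$; translating by a $k$-point of $Z$, which alters neither the isomorphism problem nor the stabilizer $\mathbf{S}_X$ (since $A\times A^t$ is commutative, so translations act trivially on stabilizers of points of $P$), we may assume $Z=H$. The analysis of \ref{E:example6.5}, applied to the closed immersion $H\hookrightarrow A$, then shows that $\mathbf{S}_X$ is a connected abelian subvariety of $A\times A^t$ of dimension $\dim A$, sitting in the exact sequences exhibited there. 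Next I claim $(\mathbf{S}_X,\mls N)$ is a Lagrangian pair for a suitable line bundle $\mls N$. That $i^*\mls B_A$ restricts to a symmetric biextension on $\mathbf{S}_X\times\mathbf{S}_X$ follows from the general principle that stabilizers for the $(A\times A^t)$-action on $D(A)$ are isotropic for the canonical skew pairing on $A\times A^t$ (this is transparent in the description in \ref{E:example6.5}), together with the fact that $\dim\mathbf{S}_X$ is half of $\dim(A\times A^t)$, which upgrades isotropic to Lagrangian; the theory of symmetric biextensions then produces $\mls N$ with $\Lambda(\mls N)\simeq i^*\mls B_A$, and among the $\mathbf{S}_X^t$-torsor of such $\mls N$ one singles out the one matching the semi-homogeneous structure carried by $\mls F$. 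Finally, using \ref{T:2.7.1} to write $\mls E\simeq\pi_*\mls N'$ for an isogeny $\pi$, one checks that $\mls F$ carries an isomorphism of the form \eqref{E:7.1.1} compatible with the biextension data, whence $\mls F\simeq S_{(\mathbf{S}_X,\mls N)}$ by the construction in \cite[2.4.5, 2.3.2]{Polishchuk2}.

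\emph{Injectivity.} Suppose $S_{(\mathbf{S}_1,\mls N_1)}\simeq S_{(\mathbf{S}_2,\mls N_2)}=:\mls F$. By \cite[2.4.10]{Polishchuk2} the stabilizer of $\mls F$ is simultaneously $\mathbf{S}_1$ and $\mathbf{S}_2$, so $\mathbf{S}_1=\mathbf{S}_2=:\mathbf{S}$. It remains to recover $\mls N$ from the pair $(\mathbf{S},\mls F)$ up to isomorphism. Restricting \eqref{E:7.1.1} to $\{z_0\}\times\mathbf{S}$ for a $k$-point $z_0$ of the support of $\mls F$ identifies the vector bundle $s\mapsto\mls F_{z_0+u(s)}$ on $\mathbf{S}$ with $\mls F_{z_0}\otimes_k(\mls N\otimes\mls M)$ for a degree-zero bundle $\mls M$ depending only on $z_0$ and $\mathbf{S}$, so $\mls N$ is determined. (Equivalently, taking determinants along the support pins down $\mls N^{\otimes r}$ for $r=\operatorname{rank}\mls E$, the condition $\Lambda(\mls N)\simeq i^*\mls B_A$ pins down the class of $\mls N$ in $\Pic(\mathbf{S})/\mathbf{S}^t$, and a short argument removes the residual $\mathbf{S}^t[r]$-ambiguity.) Hence $\mls N_1\simeq\mls N_2$, and the two Lagrangian pairs agree.

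I expect the surjectivity step to be the main obstacle, specifically the passage from ``$\mls F$ is a semi-homogeneous sheaf with stabilizer $\mathbf{S}_X$'' to ``$\mls F$ carries an isomorphism of the precise form \eqref{E:7.1.1}.'' This amounts to promoting the pointwise isomorphisms $t_{u(s)}^*\mls F\simeq\mls F\otimes\mls L_{v(s)}$, $s\in\mathbf{S}_X$, to a single isomorphism over $A\times\mathbf{S}_X$ and then identifying the resulting twist as pulled back from a line bundle on $\mathbf{S}_X$ — essentially a relative form of \ref{T:2.7.1} — and matching the normalizations with Polishchuk's conventions in \cite[2.4]{Polishchuk2} so that one lands exactly on $\mls N$ rather than on a translate of it by an element of $\mathbf{S}_X^t$ or a twist by a torsion bundle. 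Once this is in place, the remainder is formal given \ref{T:mainthm}, \ref{E:example6.5}, and the cited results of \cite{Polishchuk2}.
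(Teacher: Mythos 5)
Your overall skeleton---pass to the stabilizer $\mathbf{S}_X\subset A\times A^t$ of \ref{P:6.1}, identify it with $\mathbf{S}$ via \cite[2.4.10]{Polishchuk2}, then recover $\mls N$---matches the paper, and your injectivity sketch can be made to work (the paper argues differently: if $\mls N_1,\mls N_2$ both admit an isomorphism \eqref{E:7.1.1} against the same $\mls F$, then $\mls R=\mls N_2\otimes\mls N_1^{-1}$ lies in $\Pic^0(\mathbf{S}_X)$ and adjunction together with $\End(\mls F)=k$ forces $\mls R\simeq\mls O_{\mathbf{S}_X}$; your fibrewise restriction of \eqref{E:7.1.1} gives $(\mls N_1\otimes\mls M)^{\oplus r}\simeq(\mls N_2\otimes\mls M)^{\oplus r}$ and still needs a cancellation step such as Krull--Schmidt, while your determinant variant genuinely leaves an $r$-torsion ambiguity that you do not resolve). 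The genuine gap is in surjectivity, and you flag it yourself: the isotropy/symmetric-biextension argument at best produces \emph{some} $\mls N$ with $\Lambda(\mls N)\simeq i^*\mls B_A$, well defined only up to $\Pic^0(\mathbf{S}_X)$, and the two decisive steps---``one singles out the one matching the semi-homogeneous structure'' and ``one checks that $\mls F$ carries an isomorphism of the form \eqref{E:7.1.1}''---are precisely the content of the theorem and are not proved. Without the isomorphism \eqref{E:7.1.1} there is no identification $\mls F\simeq S_{(\mathbf{S}_X,\mls N)}$, so surjectivity is not established.

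Moreover, the step you describe as ``a relative form of \ref{T:2.7.1}'' (promoting the pointwise isomorphisms $t_{u(s)}^*\mls F\simeq\mls F\otimes\mls L_{v(s)}$ to a single isomorphism over $A\times\mathbf{S}_X$) is not how the paper proceeds and is avoidable: \ref{T:mainthm} and \ref{T:2.7.1} give the completely explicit model $\mls F\simeq t_a^*i_*\pi_*\mls N'$ with $\pi:H'\rightarrow H$ an isogeny and $\lambda_{\mls N'}$ injective on $\mathrm{Ker}(\pi)$, while \ref{E:example6.5} identifies $\mathbf{S}_X=A^t\times_{H^{\prime t},\alpha'}H'$. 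One then takes $\mls N$ to be the pullback of $\mls N'$ along $\mathbf{S}_X\rightarrow H'$, twisted by the pullback of $\mls P_{A,a}^{-1}$ to account for $t_a$; the Lagrangian condition follows from $\mls P_A|_{H'\times A^t}\simeq(1\times i^t)^*\mls P_H$, and \eqref{E:7.1.1} for $\mls F$ is obtained by pushing forward the tautological isomorphism $m^*\mls N'\otimes\mathrm{pr}_{13}^*\mls P_A^{-1}\otimes\mathrm{pr}_2^*\mls N^{-1}\simeq\mathrm{pr}_1^*\mls N'$ from $H'\times\mathbf{S}_X$ to $A\times\mathbf{S}_X$. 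Note also that your opening reduction ``translate so that $Z=H$'' is not free: translating $\mls F$ by $a$ changes the line bundle of the corresponding pair by exactly this $\mls P_{A,a}^{-1}$-twist, so the compatibility of $(\mathbf{S},\mls N)\mapsto S_{(\mathbf{S},\mls N)}$ with translation is something you would have to prove anyway. Supplying this explicit construction (or an equivalent one) is what your argument is missing.
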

\begin{proof}
Starting with a coherent sheaf $\mls F$ satisfying (i) and (ii) in \ref{T:mainthm} we construct the Langrangian pair $(\mathbf{S}, \mls N)$ mapping to it as follows.

Let $Z\subset A$ be the support of $\mls F$, so $Z$ is a torsor under an abelian variety $i:H\subset A$, which is a quotient $\mathbf{S}_X\rightarrow H$ of the subgroup scheme $\mathbf{S}_X\subset A\times A^t$ defined as in \ref{P:6.1}.
Let $m :A\times \mathbf{S}_X\rightarrow A$ be the action map given by 
$$
(a, a', [\mls L])\mapsto a+a',
$$
and let $\mls B$ be the line bundle on $A\times \mathbf{S}_X$ given by pulling back the Poincar\'e bundle on $A\times A^t$ along $\text{pr}_{13}:A\times \mathbf{S}_X\rightarrow A\times A^t$.  

To prove the theorem it suffices to prove the following claim, the argument for which occupies the remainder of the proof.

\begin{claim} There exists a unique line bundle $\mls N$ on $\mathbf{S}_X$ for which $(\mathbf{S}_X, \mls N)$ is a Lagrangian pair and such that there exists an isomorphism of sheaves on $A\times \mathbf{S}_X$ as in \ref{E:7.1.1} with $S_{(\mathbf{S}_X, \mls N)}$ replaced by $\mls F$.
\end{claim}

If $\mls N_1$ and $\mls N_2$ are two invertible sheaves on $\mathbf{S}_X$ for which $(\mathbf{S}_X, \mls N_i)$ ($i=1,2$) are both Lagrangian pairs then the biextension $\Lambda (\mls N_1\otimes \mls N_2^{-1})$ is trivial and therefore $\mls N_2\simeq \mls N_1\otimes \mls R$ for a translation invariant line bundle $\mls R$ on $\mathbf{S}_X$ (recall that $\Lambda (\mls N)$ is the biextension over $\mathbf{S}_X\times \mathbf{S}_X$ given by $m^*\mls N\otimes \text{pr}_1^*\mls N^{-1}\otimes \text{pr}_2^*\mls N^{-1}$).  From the formula \eqref{E:7.1.1} for both $\mls N_1$ and $\mls N_2$ we get an isomorphism
$\text{pr}_1^*\mls F\simeq \text{pr}_1^*\mls F\otimes \text{pr}_2^*\mls R,$
which defines a nonzero map
$$
\text{pr}_2^*\mls R\rightarrow \mls RHom (\text{pr}_1^*\mls F, \text{pr}_1^*\mls F).
$$
By adjunction and using property (i) in \ref{T:mainthm} we get a nonzero map
$$
\mls R\rightarrow \text{pr}_{2*}\mls R^0\mls Hom(\text{pr}_1^*\mls F, \text{pr}_1^*\mls F)\simeq \mls O_{\mathbf{S}_X}.
$$
Since $\mls R$ is algebraically equivalent to $0$ we conclude that this map is an isomorphism and that $\mls R$ is trivial.  This proves the uniqueness part of the claim.

For the existence part, note that there exists an isogeny $\pi :H'\rightarrow H$, a line bundle $\mls N'$ on $H'$ for which  the map $\lambda _{\mls N'}:H'\rightarrow H^{\prime t}$ is injective on $\text{Ker}(\pi )$, and a point $a\in A(k)$ such that  $\mls F = t_a^*i_*\pi _*\mls N'.$ 

By the discussion in \ref{E:example6.5} we have $\mathbf{S}_X = A^t\times _{H^{\prime t}, \alpha '}H'.$  Now consider the commutative diagram
$$
\xymatrix{
H'\times \mathbf{S}_X\ar[d]_-{\text{id}_{H'}\times \text{pr}_3}\ar[r]^-{\text{pr}_{12}}& H'\times H'\ar[d]^-{\text{id}\times \alpha '}\\
H'\times A^t\ar[r]^-{\text{id}_{H'}\times i^t}\ar[d]& H'\times H^{\prime t}\\
A\times A^t,}
$$
where the square is cartesian.  Now we have an isomorphism of biextensions
$\mls P_{A}|_{H'\times A^t}\simeq (1\times i^t)^*\mls P_H,$
so we get an isomorphism of biextensions $\text{pr}_{13}^*\mls P_{A}\simeq \text{pr}_{12}^*\Lambda (\mls N')$ over $H'\times \mathbf{S}_X.$ 
$\text{pr}_{13}^*\mls P_{A}\simeq \text{pr}_{12}^*\Lambda (\mls N')$
Therefore if $\mls N$ denotes the pullback of $\mls N'$ along the projection $\mathbf{S}_X\rightarrow H'$ then $(\mathbf{S}_X, \mls N)$ is a Lagrangian pair.  Furthermore, writing out this isomorphism we get
$$
\text{pr}_{13}^*\mls P_A\simeq m^*\mls N'\otimes \text{pr}_1^*\mls N^{\prime -1}\otimes \text{pr}_{2}^*\mls N^{\prime -1},
$$
which rearrages to
$$
m^*\mls N^{\prime }\otimes \text{pr}_{13}^*\mls P_A^{-1}\otimes \text{pr}_2^*\mls N^{-1}\simeq \text{pr}_1^*\mls N'.
$$
Pushing forward to $A\times \mathbf{S}_X$ we see that the claim holds for $i_*\pi _*\mls N'$.

To handle the case of $\mls F = t_a^*i_*\pi _*\mls N'$ for some $a\in A$ consider the commutative diagram
$$
\xymatrix{
A\times \mathbf{S}\ar[r]^-m\ar[d]_-{t_a\times \text{id}}& A\ar[d]^-{t_a}\\
A\times \mathbf{S}\ar[r]^-m& A,}
$$
and note that using the biextension structure on $\mls P_A$ we have
$$
(t_a\times \text{id})^*\text{pr}_{13}^*\mls P_A\simeq \text{pr}_{13}^*\mls P_A\otimes \text{pr}_3^*\mls P_{A, a},
$$
where $\mls P_{A, a}$ is the line bundle on $A^t$ obtained by restriction $\mls P_A$ to $\{a\}\times A^t$.  From this it follows that taking $\mls N$ equal to the pullback of $\mls N'$ tensored with the pullback of $\mls P_{A, a}^{-1}$ verifies the claim for $t_a^*i_*\pi _*\mls N'$.  

This completes the proof of the claim and \ref{T:7.3}.
\end{proof}

\providecommand{\bysame}{\leavevmode\hbox to3em{\hrulefill}\thinspace}
\providecommand{\MR}{\relax\ifhmode\unskip\space\fi MR }
\providecommand{\MRhref}[2]{%
  \href{http://www.ams.org/mathscinet-getitem?mr=#1}{#2}
}
\providecommand{\href}[2]{#2}

\end{document}